\newtheorem{theorem}{Theorem}[section]
\newtheorem{proposition}{Proposition}[section]
\newtheorem{lemma}{Lemma}[section]
\newtheorem{definition}{Definition}[section]
\newtheorem{condt}{Assumption}[section]
\newtheorem*{rem}{Remark}
\newcommand{\loc}{\operatorname{loc}}
\newcommand{\di}{d}
\newcommand{\ao}{A}
\newcommand{\bo}{B}
\newcommand{\esup}{\operatorname{ess} \operatorname{sup}}
\newcommand{\einf}{\operatorname{ess} \operatorname{inf}}
\newcommand{\vl}{\vartheta}
\newcommand{\spt}{\operatorname{spt}}
\newcommand{\inter}{\operatorname{int}}
\newcommand{\dvt}{\textnormal{d}\vartheta}
\newcommand{\dx}{\textnormal{d}x}
\newtheorem{thrm}{Theorem}
\title{Removable singularities for sub-solutions of  elliptic equations involving  weighted variable exponent Sobolev spaces}
\author{Juan Alcon Apaza}
\address{Universidade Federal Fluminense, Instituto de Matemática, Campus do Gragoatá, Rua Prof. Marcos Waldemar de Freitas, s/n, bloco H, Niterói, RJ 24210-201, Brazil }
\email{jpablo@id.uff.br}
\begin{document}

\maketitle

\begin{abstract}
We study the removability of a singular set for elliptic equations involving weight functions and variable exponents. We consider the case where the singular set satisfies conditions related to some generalization of \textit{upper Minkowski content} or a \textit{net measure}, and give sufficient conditions for removability of this singularity for  equations in the weighted variable exponent Sobolev spaces $W^{1,p(\cdot)}(\Omega,\vl)$.
\end{abstract}

\let\thefootnote\relax\footnote{2020 \textit{Mathematics Subject Classification}. 35A21,  35J57, 35J60.}
\let\thefootnote\relax\footnote{\textit{Key words and phrases}. variable exponent, weight function, singular set, removable singularity.}

\markright{REMOVABLE SINGULARITIES FOR SUB-SOLUTIONS OF  ELLIPTIC EQUATIONS}

\section{Introduction}

This paper is devoted to the study of conditions guaranteeing the removability of singular set for non-negative weak sub-solutions, in the weighted variable exponent Sobolev spaces, for nonlinear elliptic equations of the form
$$
-\operatorname{div} \ao (x,u,\nabla u) + B(x,u)=0.
$$
For the exacts requirements on the mappings $\ao$ and $\bo$ we refer to Section \ref{66}. In this work  we employ the arguments used to prove the following results.

\begin{thrm}(see \cite[Theorem 1]{rii}) \label{68}
Let  $\Omega\subset \mathbb{R}^n$, $n\geqslant 2$, be an open set, and $E\subset \Omega$ be closed  in $\Omega$. Denote by $\mathcal{M} ^\alpha (E)$ the $\alpha$-dimensional upper Minkowski content of $E$. Suppose that $\alpha \in [0,n-2]$ and $\mathcal{M} ^\alpha (E)=0$. If $f$ is subharmonic in $\Omega \backslash E$ and satisfies 
$$
f(x) \leqslant C \di (x,E) ^{\alpha + 2-n} \quad \left(x\in \Omega \backslash E\right)
$$  
for some positive constant $C$, then $f$ has a subharmonic extension to $\Omega$.
\end{thrm}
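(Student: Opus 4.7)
I would extend $f$ to $\Omega$ by the upper-semicontinuous envelope $\tilde f(x):=\limsup_{\Omega\setminus E\ni y\to x} f(y)$ for $x\in E$; since $\mathcal{M}^\alpha(E)=0$ forces $|E|=0$, this extension agrees with $f$ almost everywhere and is upper semicontinuous on $\Omega$. My goal is to verify the distributional characterization of subharmonicity, namely
$$
\int_\Omega \tilde f\,\Delta\varphi\,dx \;\geq\; 0 \qquad\text{for every } \varphi\in C_c^\infty(\Omega),\ \varphi\geq 0.
$$

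A preparatory step is to check that $\tilde f\in L^1_{\mathrm{loc}}(\Omega)$. A layer-cake calculation exploiting $|E_t|=o(t^{n-\alpha})$ (which is precisely the content of $\mathcal{M}^\alpha(E)=0$) gives
$$
\int_{E_r} \di(x,E)^{\alpha+2-n}\,dx \;=\; o(r^2),
$$
so $\di(\cdot,E)^{\alpha+2-n}\in L^1_{\mathrm{loc}}(\Omega)$ and the growth hypothesis yields $f^+\in L^1_{\mathrm{loc}}(\Omega)$. Local integrability of $f^-$ across $E$ is obtained by applying the Riesz decomposition on exhausting subdomains $\{\di(\cdot,E)>\delta\}\cap B$ and letting $\delta\to 0$.

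The main step uses smooth cutoffs $\psi_\epsilon$ with $\psi_\epsilon\equiv 0$ on $E_\epsilon$, $\psi_\epsilon\equiv 1$ on $\Omega\setminus E_{2\epsilon}$, $|\nabla\psi_\epsilon|\leq C\epsilon^{-1}$, and $|\Delta\psi_\epsilon|\leq C\epsilon^{-2}$. Then $\varphi\psi_\epsilon\in C_c^\infty(\Omega\setminus E)$, so the subharmonicity of $f$ on $\Omega\setminus E$ gives
$$
\int f\,\Delta(\varphi\psi_\epsilon)\,dx \;\geq\; 0.
$$
Expanding the Laplacian via the Leibniz rule produces a main term $\int f\psi_\epsilon\,\Delta\varphi\,dx$, which tends to $\int\tilde f\,\Delta\varphi\,dx$ by dominated convergence, plus two error terms involving $\nabla\psi_\epsilon$ and $\Delta\psi_\epsilon$ supported on the shell $E_{2\epsilon}\setminus E_\epsilon$.

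The heart of the argument---and where I expect the main technical obstacle---is showing these error terms vanish as $\epsilon\to 0$. For the $f^+$ contributions, combining $f^+\leq C\epsilon^{\alpha+2-n}$ on the shell with $|E_{2\epsilon}|=o(\epsilon^{n-\alpha})$ gives
$$
\epsilon^{-2}\int_{E_{2\epsilon}} f^+\,dx \;\leq\; C\epsilon^{-2}\cdot o(\epsilon^2) \;=\; o(1),
$$
and similarly for the $\epsilon^{-1}$-weighted gradient term. The $f^-$ contribution is the delicate point since the growth hypothesis bounds $f$ only from above; I would use the Riesz representation of $f$ on a subdomain of $\Omega\setminus E$ separating $E$ from the support of $\varphi$ to extract an $L^1$ bound on $f^-$ over the shell matching the same $o(\epsilon^2)$ rate, thereby concluding $\int\tilde f\,\Delta\varphi\,dx\geq 0$ and completing the proof.
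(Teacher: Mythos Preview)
The paper does not prove Theorem~A itself; it is quoted from \cite{rii} as motivation. The paper's own contribution is Theorem~\ref{27}, a generalization to divergence-form equations in weighted variable-exponent spaces, and it is against that proof that a comparison makes sense.

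Your strategy---a single smooth cutoff $\psi_\epsilon$ in $\di(\cdot,E)$, Leibniz expansion of $\Delta(\varphi\psi_\epsilon)$, and shell error control via $|E_{2\epsilon}|=o(\epsilon^{n-\alpha})$---is the natural direct argument for the Laplacian and is essentially the approach of the original reference. The paper's proof of Theorem~\ref{27} proceeds differently because of the divergence structure: it first derives a Caccioppoli-type gradient bound by testing with $(\psi_r\circ\di(\cdot,E))^{p^+}u$ (Step~2), and in the removability step (Step~4) it covers $\spt\varphi$ by dyadic cubes and invokes the Harvey--Polking partition of unity (Lemma~\ref{26}) rather than a single radial cutoff. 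The dyadic construction is what lets the argument work when only first derivatives of the test function appear in the weak formulation; your approach exploits the pointwise bound $|\Delta\psi_\epsilon|\leq C\epsilon^{-2}$, which has no counterpart in the divergence-form setting and hence would not transfer to the paper's generalization.

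One point you correctly flag as delicate but leave open is the $f^-$ contribution to the shell integrals. The paper's Theorem~\ref{27} avoids this entirely by assuming $u\geq 0$, so no guidance is available there; in the classical setting this does require an extra device (truncation $\max(f,-M)$ followed by a limit, or a careful Riesz-decomposition argument as you propose), and without it your error estimate for the $\Delta\psi_\epsilon$ term is incomplete.
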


\begin{thrm}(see \cite[Theorem 3]{rii})\label{69}
Let  $\Omega\subset \mathbb{R}^n$, $n\geqslant 2$, be an open set, and $E\subset \Omega$ be closed  in $\Omega$. Denote by $\mathcal{H} ^\alpha (E)$ the $\alpha$-dimensional Hausdorff measure of $E$. Suppose that $\alpha \in [0,n-2]$ and $\mathcal{H} ^\alpha (E)=0$. If $f$ is subharmonic in $\Omega \backslash E$ and satisfies 
$$
\frac{1}{\left|B(x,r) \right|} \int _{B(x,r)} \max \{f,0\} \dx  \leqslant C r^{\alpha + 2 -n} \quad \left( \overline{B(x,r)} \subset \Omega\right)
$$  
for some positive constant $C$, then $f$ has a subharmonic extension to $\Omega$.
\end{thrm}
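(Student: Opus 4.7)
My plan is to reduce the statement to a distributional check: I will show that an appropriate upper semi-continuous extension $\tilde f$ of $f$ to $\Omega$ satisfies $\int \tilde f\,\Delta\phi\,\dx\ge 0$ for every non-negative $\phi\in C_c^\infty(\Omega)$.  Since upper semi-continuity plus this distributional inequality is equivalent to classical sub-harmonicity, such an estimate delivers the sought extension.  The proof will imitate the cutoff argument used for Theorem~\ref{68}, but the pointwise bound there is replaced by a ball-average bound, and the Minkowski thickening by a Hausdorff-type cover.  A direct appeal to Theorem~\ref{68} is not available, since $\mathcal{H}^\alpha(E)=0$ does not in general imply $\mathcal{M}^\alpha(E)=0$.

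\emph{Step 1: Hausdorff cutoffs.}  For each $\delta>0$ I choose a countable cover $\{B(x_i,r_i)\}$ of $E$ with $\sum_i r_i^\alpha<\delta$ and each $r_i$ so small that $\overline{B(x_i,2r_i)}\subset\Omega$.  I then construct a smooth cutoff $\eta_\delta$ that vanishes on $\bigcup_i B(x_i,r_i)$, equals $1$ off $\bigcup_i B(x_i,2r_i)$, and satisfies $|\nabla\eta_\delta|\le C/r_i$ and $|\Delta\eta_\delta|\le C/r_i^2$ on the annulus $B(x_i,2r_i)\setminus B(x_i,r_i)$.

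\emph{Step 2: Testing and passing to the limit.}  Fix $\phi\in C_c^\infty(\Omega)$ with $\phi\ge 0$.  Then $\eta_\delta\phi$ is a non-negative test function compactly supported in $\Omega\setminus E$; the distributional sub-harmonicity of $f$ there gives, after expanding the Laplacian,
$$
0\le\int \tilde f\,\eta_\delta\,\Delta\phi\,\dx+2\int \tilde f\,\nabla\eta_\delta\cdot\nabla\phi\,\dx+\int \tilde f\,\phi\,\Delta\eta_\delta\,\dx.
$$
The first integral converges to $\int \tilde f\,\Delta\phi\,\dx$ by dominated convergence once $\tilde f\in L^1_{\loc}(\Omega)$ is established.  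For the remaining two terms, rewrite the averaged hypothesis as $\int_{B(x_i,2r_i)} f^+\,\dx\le Cr_i^{\alpha+2}$; combined with the derivative bounds on $\eta_\delta$ this yields
$$
\Big|\int \tilde f\,\phi\,\Delta\eta_\delta\,\dx\Big|\le C\|\phi\|_\infty\sum_i r_i^\alpha\le C\|\phi\|_\infty\,\delta,
$$
and an analogous $O(\delta)$ bound on the gradient cross term after summing $\sum_i r_i^{\alpha+1}\le r_0\,\delta$.  Letting $\delta\to 0$ produces $\int \tilde f\,\Delta\phi\,\dx\ge 0$, as required.

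\emph{Main obstacle.}  The principal technical difficulty is controlling the negative part $f^-$, since the averaged hypothesis bounds only $f^+$.  I would first argue that $f\in L^1_{\loc}(\Omega)$: away from $E$ this follows from the standard theory of sub-harmonic functions, while near $E$ one uses the distributional identity $-\Delta f=\mu\ge 0$ on $\Omega\setminus E$ to control $\int f^-$ through the dual bound on $\int f^+$ on slightly enlarged balls.  A secondary, more delicate point is the choice of pointwise extension at points of $E$ so that the resulting function is upper semi-continuous; one takes the envelope $\tilde f(x)=\limsup_{y\to x,\,y\notin E}f(y)$, which coincides a.e.\ with $f$ and inherits sub-harmonicity from the distributional estimate just derived.
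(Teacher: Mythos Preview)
The paper does not prove Theorem~\ref{69}: it is quoted from \cite[Theorem~3]{rii} as motivation, and the paper's own contribution is the weighted variable-exponent generalisation Theorem~\ref{62}. There is therefore no ``paper's proof'' to compare against directly. The closest analogue is the proof of Theorem~\ref{62}, and it differs from your sketch in two structural ways. First, rather than ball covers drawn from the definition of Hausdorff measure, it works with dyadic cubes and the net measure $\mathcal L_{\alpha,\gamma,\vl}$, invoking the Harvey--Polking partition of unity (Lemma~\ref{26}); this automatically handles overlaps and delivers the derivative bounds you need, whereas your construction of $\eta_\delta$ from overlapping bump functions would require additional work to guarantee $|\Delta\eta_\delta|\le C/r_i^2$ on intersecting annuli. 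Second, the paper's generalisation assumes $u\ge 0$ from the outset, which sidesteps entirely the issue of the negative part.

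On that last point your sketch has a real gap. You correctly identify control of $f^-$ as the main obstacle, but what you write is not an argument: the distributional identity should read $\Delta f=\mu\ge 0$ (not $-\Delta f=\mu\ge 0$), and there is no general ``dual bound'' that controls $\int_B f^-$ by $\int_{B'} f^+$ for a subharmonic $f$. Both the dominated-convergence step and the $O(\delta)$ estimates in your Step~2 require control of $|\tilde f|$ on the annuli, not merely of $f^+$; until you supply a genuine argument that $f^-\in L^1_{\loc}(\Omega)$ with quantitative bounds on the cover (for instance via the sub-mean-value inequality on balls that avoid $E$, combined with the fact that a subharmonic function not identically $-\infty$ is locally integrable), the limit passage is unjustified.
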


\noindent  Motivated by these theorems we given a generalization of the \textit{upper Minkowski content} and the \textit{net measure} involving weight and variable exponents, see the next section. In the case that certain smoothness, Hausdorff measure or uniform Minkowski condition is imposing on the singular set, a different approach can be found in \cite{gar, hir, kos}.

\section{Preliminaries}

Throughout the whole article  $\Omega$ is a bounded open set in $\mathbb{R}^n$, $n\geqslant 3$. We will work on $\Omega$ with the Lebesgue measure $\dx$. We first  recall some facts on spaces $L^{ p(\cdot)} (\Omega , \vl)$ and $W^{1 , p(\cdot)} (\Omega  , \vl)$. We write
$$
L^\infty _+ (\Omega)= \left\{p\in L^\infty (\Omega) \:|\: \einf _\Omega p > 1\right\}.
$$ 
For $p\in L^\infty _+ (\Omega)$, we define
$$
W(\Omega) = \left\{\vl \in L^1 _{\loc} (\Omega) \:|\: \vl > 0 \text { almost everywhere in } \Omega\right\}.
$$
An element in $W(\Omega) $ is called a \textit{weight function}. For $p\in L^\infty _+ (\Omega)$, set
$$
W_p (\Omega) = \left\{ \vl \in W(\Omega) \:|\: \vl ^{-\frac{1}{p-1}} \in L^1 _{\loc} (\Omega)\right\}.
$$ 

Let $p\in L^\infty _+ (\Omega)$ and $\vl \in W(\Omega)$, we define the functional 
$$
\rho  _{p ,\vl}(u) = \int _{\Omega } |u|^{p} \vl \dx.
$$

The \textit{weighted variable exponent Lebesgue space} $L^{p(\cdot)}(\Omega , \vl)$ is the class of all functions $u$ such that $\rho _{p ,\vl}(t u)<\infty$, for some $t>0$. $L^{p(\cdot)}(\Omega , \vl)$ is a Banach space equipped with the norm
$$
\|u\|_{L^{p(\cdot)} (\Omega , \vl)}=\inf \left\{\lambda>0\:|\:  \rho _{p ,\vl}\left(\frac{u}{\lambda}\right) \leq 1\right\};
$$
see \cite[Theorem 2.5]{kova}. In the case that $\vl \equiv 1$, we have $L^{p(\cdot)} (\Omega , \vl) = L^{p(\cdot)} (\Omega)$. 

For any $p\in L^\infty _+ (\Omega)$, we denote by  $p^\prime = \frac{p}{p-1}$ the conjugate function.

\begin{proposition} (see \cite[Theorem 2.1]{kova}.)
Let $p \in L^\infty _+ (\Omega)$. For any $u \in L^{p(\cdot)}(\Omega)$ and $v \in L^{p^{\prime}(\cdot)}(\Omega)$,
\begin{equation}\label{31}
\int_{\Omega}|u v| \dx\leq 2\|u\|_{L^{p(\cdot)} (\Omega)}\|v\|_{L^{p^\prime(\cdot)} (\Omega)} .
\end{equation}
\end{proposition}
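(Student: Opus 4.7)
The plan is to establish the inequality via the classical pointwise Young-inequality argument, adapted to variable exponents. First I would dispose of the trivial cases: if $\|u\|_{L^{p(\cdot)}(\Omega)}=0$ or $\|v\|_{L^{p'(\cdot)}(\Omega)}=0$ then $u$ or $v$ vanishes a.e.\ and both sides of \eqref{31} are zero. So assume both norms are strictly positive and, by homogeneity of the norm and the integral, normalize
$$\tilde{u} := \frac{u}{\|u\|_{L^{p(\cdot)}(\Omega)}}, \qquad \tilde{v} := \frac{v}{\|v\|_{L^{p'(\cdot)}(\Omega)}}.$$
It then suffices to show $\int_\Omega |\tilde{u}\tilde{v}|\,\dx \leq 2$, after which multiplying through by $\|u\|_{L^{p(\cdot)}(\Omega)} \|v\|_{L^{p'(\cdot)}(\Omega)}$ yields \eqref{31}.

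Next I invoke the pointwise Young inequality: since $\einf_\Omega p > 1$, the conjugate exponent $p'(x)=p(x)/(p(x)-1)$ is well-defined and finite a.e., and for every $x\in\Omega$ and nonnegative $a,b$ one has
$$ab \leq \frac{a^{p(x)}}{p(x)} + \frac{b^{p'(x)}}{p'(x)}.$$
Applying this with $a=|\tilde{u}(x)|$, $b=|\tilde{v}(x)|$ and integrating over $\Omega$ yields
$$\int_\Omega |\tilde{u}\tilde{v}|\,\dx \leq \int_\Omega \frac{|\tilde{u}|^{p(x)}}{p(x)}\,\dx + \int_\Omega \frac{|\tilde{v}|^{p'(x)}}{p'(x)}\,\dx \leq \rho_{p,1}(\tilde{u}) + \rho_{p',1}(\tilde{v}),$$
where in the last step I used the pointwise bounds $1/p(x) \leq 1$ and $1/p'(x) \leq 1$, which follow from $p,p' \geq 1$ a.e.

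The decisive remaining step, and the one which is the main technical obstacle, is the \emph{unit ball property}: namely $\rho_{p,1}(\tilde{u}) \leq 1$ and $\rho_{p',1}(\tilde{v}) \leq 1$. This is a standard property of Luxemburg norms but requires justification. By the definition of the norm, there exists a decreasing sequence $\lambda_k \downarrow \|u\|_{L^{p(\cdot)}(\Omega)}$ with $\rho_{p,1}(u/\lambda_k)\leq 1$ for each $k$. Since the integrands $|u(x)/\lambda_k|^{p(x)}$ increase monotonically as $\lambda_k$ decreases, the monotone convergence theorem gives
$$\rho_{p,1}(\tilde{u}) = \lim_{k\to\infty}\rho_{p,1}(u/\lambda_k) \leq 1,$$
and analogously for $\tilde{v}$; this uses $p\in L^\infty_+(\Omega)$ in order that all the exponentiations are well-behaved. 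Combining the last two displays gives $\int_\Omega |\tilde{u}\tilde{v}|\,\dx \leq 1+1 = 2$, which is exactly the claim after undoing the normalization. The constant $2$ arises directly from the two unit-modular estimates and cannot be improved by this argument, although in constant-exponent spaces the sharp constant is $1$; the factor $2$ is the price paid for handling $p$ and $p'$ separately in the variable-exponent setting.
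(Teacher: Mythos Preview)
Your argument is correct and is precisely the standard proof of the variable-exponent H\"older inequality: normalize, apply pointwise Young's inequality, and invoke the unit-ball property $\rho_{p,1}(u/\|u\|)\leqslant 1$ of the Luxemburg norm (which you justify correctly via monotone convergence). Note, however, that the paper does not supply its own proof of this proposition at all---it is simply quoted from \cite[Theorem~2.1]{kova} and used as a tool---so there is no ``paper's proof'' to compare against; your write-up is essentially what one finds in the cited reference.
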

\begin{proposition}\label{32}  (see \cite[pp. 4]{una}, \cite[Theorem 1.3]{fan}.)
Let $p \in L^\infty _+ (\Omega)$. For any $u \in L^{p(\cdot)}(\Omega , \vl)$, we have
\begin{gather} 
\min \left\{  \rho _{p , \vl} (u) ^{\frac{1}{p^-}} ,  \rho _{p , \vl} (u) ^{\frac{1}{p^+}} \right\}\leqslant \left\|u \right\| _{L^{p(\cdot)} (\Omega , \vl)} \leqslant \max \left\{   \rho  _{p , \vl} (u) ^{\frac{1}{p^-}} ,  \rho _{p , \vl} (u) ^{\frac{1}{p^+}} \right\}, \label{57}\\ 
\min \left\{ \left\|u \right\| _{L^{p(\cdot)} (\Omega , \vl)} ^{p^-} ,  \left\|u \right\| _{L^{p(\cdot)} (\Omega , \vl)} ^{p^+} \right\}\leqslant  \rho _{p , \vl} (u) \leqslant \max \left\{ \left\|u \right\| _{L^{p(\cdot)} (\Omega , \vl)} ^{p^-} ,  \left\|u \right\| _{L^{p(\cdot)} (\Omega , \vl)} ^{p^+} \right\},\label{58}
\end{gather}
where $p^- = \einf _{\Omega} p$ and $p^+ = \esup _{\Omega} p$.
\end{proposition}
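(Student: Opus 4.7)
The plan is to work directly from the Luxemburg definition of the norm and exploit the pointwise homogeneity
\[
\rho_{p,\vl}(u/\lambda) = \int_\Omega |u|^{p(x)} \lambda^{-p(x)} \vl \dx,
\]
together with the elementary bounds $\lambda^{-p^+} \leq \lambda^{-p(x)} \leq \lambda^{-p^-}$ for $\lambda \geq 1$, and $\lambda^{-p^-} \leq \lambda^{-p(x)} \leq \lambda^{-p^+}$ for $0 < \lambda \leq 1$. A preparatory step I would record first is the closure property: whenever $u$ is a nonzero element of $L^{p(\cdot)}(\Omega,\vl)$ and $\lambda^\ast := \|u\|_{L^{p(\cdot)}(\Omega,\vl)}$, one has $\rho_{p,\vl}(u/\lambda^\ast) \leq 1$. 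This is obtained by picking $\lambda_n \searrow \lambda^\ast$ admissible and passing to the limit via monotone convergence applied to $|u|^{p(x)} \lambda_n^{-p(x)}$.

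For the upper bound in \eqref{57}, I would produce $M := \max\{\rho_{p,\vl}(u)^{1/p^-}, \rho_{p,\vl}(u)^{1/p^+}\}$ as an explicit admissible candidate in the infimum defining the norm. Splitting into the two cases $\rho_{p,\vl}(u) \geq 1$ (so $M = \rho_{p,\vl}(u)^{1/p^-} \geq 1$) and $\rho_{p,\vl}(u) \leq 1$ (so $M = \rho_{p,\vl}(u)^{1/p^+} \leq 1$), the appropriate elementary bound on $\lambda^{-p(x)}$ gives $\rho_{p,\vl}(u/M) \leq \rho_{p,\vl}(u)/M^{p^-} = 1$ in the first case and analogously $\rho_{p,\vl}(u/M) \leq \rho_{p,\vl}(u)/M^{p^+} = 1$ in the second, whence $\|u\|_{L^{p(\cdot)}(\Omega,\vl)} \leq M$.

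For the lower bound in \eqref{57}, I would invoke the closure property to write $\rho_{p,\vl}(u/\lambda^\ast) \leq 1$, and then split according to whether $\lambda^\ast \geq 1$ or $\lambda^\ast \leq 1$. The corresponding lower bounds on $\lambda^{-p(x)}$ yield $\rho_{p,\vl}(u) \leq (\lambda^\ast)^{p^+}$ in the first case and $\rho_{p,\vl}(u) \leq (\lambda^\ast)^{p^-}$ in the second, so in either case $\lambda^\ast \geq \min\{\rho_{p,\vl}(u)^{1/p^-}, \rho_{p,\vl}(u)^{1/p^+}\}$, which completes \eqref{57}.

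Finally, \eqref{58} is a purely algebraic consequence of \eqref{57}: raising the sides of \eqref{57} to the appropriate power among $p^\pm$ and noting that $\rho_{p,\vl}(u) \gtrless 1$ holds if and only if $\|u\|_{L^{p(\cdot)}(\Omega,\vl)} \gtrless 1$ rearranges into the two-sided sandwich claimed in \eqref{58}. The only genuinely delicate point I anticipate is the closure property, which is essential for the lower bound and must be justified carefully via monotone convergence; the rest is bookkeeping between the two regimes $\lambda \geq 1$ and $\lambda \leq 1$.
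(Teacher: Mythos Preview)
Your argument is correct and is the standard proof of this modular--norm inequality. Note, however, that the paper does not supply its own proof of this proposition: it is stated with citations to \cite{una} and \cite{fan} and used as a black box thereafter. Your direct derivation from the Luxemburg definition, splitting on $\lambda\gtrless 1$ and using the closure property $\rho_{p,\vl}(u/\|u\|)\le 1$, is precisely the argument one finds in those references, so there is no substantive difference in approach to compare.
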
 

Let $p\in L^\infty _+  \left( \Omega \right)$ and  $\vl \in W_p (\Omega)$. The \textit{weighted variable exponent Sobolev space} $W^{1 , p(\cdot)} \left(\Omega  , \vl\right)$ is the class of all functions $u \in L^{p(\cdot)}(\Omega , \vl)$ which have the property $|\nabla u| \in L^{p(\cdot)}(\Omega ,\vl)$.  The space $W^{1  , p(\cdot)} \left(\Omega , \vl\right)$  is a Banach space equipped with the norm
$$
\|u\|_{W^{1,  p(\cdot)} (\Omega , \vl)}=\|u\|_{L^{p(\cdot)} (\Omega , \vl)}+\|\nabla u\|_{L^{p(\cdot)} (\Omega ,  \vl)};
$$
see \cite[Theorem 3.1]{kova} and \cite[Proposition 3.2]{una}.

In the proof our main results we employed the following type of partition of unity:

 \begin{lemma}(see \cite[Lemma 3.1]{harvey}) \label{26}
 Let $\{Q_i\}_{i=1} ^N$ be a finite disjoint collection of dyadic of length $\ell _i$. For each $i$, there is a non-negative function $\varphi _i \in C_0 ^\infty (\mathbb{R}^n)$ with $\spt \varphi _i \subset  \frac{3}{2} Q_i $ such that  $\sum _{i=1} ^N \varphi _i  \leqslant 1$ on $\mathbb{R}^n$, and $\sum _{i=1} ^N \varphi _i  = 1$ on $ \cup _{i=1} ^N Q_i$. Furthermore, for each multi-index $\lambda$, there is a constant $C_\lambda = C_\lambda (n,\lambda)>0$ for which $|D^\lambda \varphi _i (x)| \leqslant C_\lambda \ell _i ^{-|\lambda|}$ for all  $x$ and $i\in \{1,\ldots ,N\}$.
 \end{lemma}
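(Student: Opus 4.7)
The plan is to build each $\varphi_i$ from a standard smooth bump $\psi_i$ adapted to $Q_i$ and then use the telescoping product $\psi_i \prod_{j<i}(1-\psi_j)$, after ordering the cubes by decreasing side length. Fix once and for all a function $\phi \in C_0^\infty(\mathbb{R}^n)$ with $0 \leq \phi \leq 1$, $\phi \equiv 1$ on $[-\tfrac{1}{2},\tfrac{1}{2}]^n$ and $\spt \phi \subset [-\tfrac{3}{4},\tfrac{3}{4}]^n$. Writing $x_i$ for the center of $Q_i$ and $\ell_i$ for its side, I would set
\[
\psi_i(x) := \phi\!\left(\tfrac{x - x_i}{\ell_i}\right),
\]
so that $\psi_i \in C_0^\infty(\mathbb{R}^n)$, $0 \leq \psi_i \leq 1$, $\psi_i \equiv 1$ on $Q_i$, $\spt \psi_i \subset \tfrac{3}{2} Q_i$, and by the chain rule $|D^\lambda \psi_i(x)| \leq C'_\lambda \ell_i^{-|\lambda|}$ with $C'_\lambda$ depending only on $n$, $\lambda$ and $\phi$.

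Next, reorder the collection so that $\ell_1 \geq \ell_2 \geq \cdots \geq \ell_N$ and define $\varphi_i := \psi_i \prod_{j<i}(1-\psi_j)$. The support statement $\spt \varphi_i \subset \tfrac{3}{2} Q_i$ is immediate from $\spt \varphi_i \subset \spt \psi_i$. A telescoping computation yields
\[
\sum_{i=1}^N \varphi_i \;=\; 1 - \prod_{i=1}^N (1 - \psi_i),
\]
which lies in $[0,1]$ pointwise (each factor $1-\psi_i$ is in $[0,1]$) and equals $1$ on $\bigcup_{i=1}^N Q_i$, because whenever $x \in Q_k$ we have $\psi_k(x)=1$ and the product vanishes.

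The genuine obstacle is the derivative bound $|D^\lambda \varphi_i| \leq C_\lambda \ell_i^{-|\lambda|}$, which is exactly what motivates the ordering by decreasing side length. Expanding by Leibniz,
\[
D^\lambda \varphi_i(x) = \sum_{\mu_0 + \sum_{j<i}\mu_j = \lambda} \binom{\lambda}{\mu_0,\mu_1,\ldots}\, D^{\mu_0}\psi_i(x) \prod_{j<i} D^{\mu_j}(1 - \psi_j)(x),
\]
and only those $j$ with $x \in \spt \psi_j$ contribute nonzero derivative factors. Because of the ordering, every such $j$ has $\ell_j \geq \ell_i$, so each factor satisfies $|D^{\mu_j}(1-\psi_j)(x)| \leq C'_{\mu_j} \ell_j^{-|\mu_j|} \leq C'_{\mu_j} \ell_i^{-|\mu_j|}$, producing the correct power of $\ell_i$. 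What remains, and what I expect to be the real work, is to cap the number of surviving Leibniz terms by a constant depending only on $n$ and $|\lambda|$. For this I would invoke the combinatorial fact that in any pairwise disjoint collection of dyadic cubes the number of $Q_j$ whose $\tfrac{3}{2}$-enlargement contains a given point $x$ is bounded by a dimensional constant $M_n$: at each fixed dyadic scale at most $3^n$ such cubes appear by the tiling property, and the dyadic nesting of disjoint cubes across different scales sharply restricts which scales can simultaneously contribute near $x$. With this uniform bound in hand, summing the finitely many surviving Leibniz terms delivers $|D^\lambda \varphi_i(x)| \leq C_\lambda \ell_i^{-|\lambda|}$ with $C_\lambda = C_\lambda(n,\lambda)$, completing the proof.
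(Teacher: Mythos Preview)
The paper does not supply its own proof of this lemma: it is quoted verbatim from \cite[Lemma~3.1]{harvey} and used as a black box, so there is nothing in the paper to compare against. Your construction---ordering the cubes by decreasing side length, setting $\varphi_i=\psi_i\prod_{j<i}(1-\psi_j)$, and reading off $\sum_i\varphi_i=1-\prod_i(1-\psi_i)$---is exactly the Harvey--Polking argument, so you have in effect reconstructed the cited proof.

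The only place your write-up is not fully rigorous is the bounded-overlap step. What you actually need is: for any point $x$ and any collection of dyadic cubes with pairwise disjoint interiors, the number of cubes $Q_j$ with $x\in\tfrac{3}{2}Q_j$ is at most a constant $M_n$ depending only on $n$. This is true; a clean way to see it is to note that if $x\in\tfrac{3}{2}Q_j$ then $d_\infty(x,Q_j)\le\tfrac{1}{4}\ell_j$, so at each fixed dyadic scale at most $2^n$ cubes qualify, and if $Q_j$ is at a finer scale than $Q_{j'}$ then the dyadic ancestor of $Q_j$ at the scale of $Q_{j'}$ is again one of those $2^n$ candidates but, by disjointness, is not $Q_{j'}$ itself---iterating this pigeonholes the collection into at most $2^n$ nested chains, each of which can contain at most one member of a disjoint family. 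Once $M_n$ is fixed, near any $x\in\spt\varphi_i$ the product $\prod_{j<i}(1-\psi_j)$ has at most $M_n$ nonconstant factors, the Leibniz expansion has a number of terms bounded in terms of $n$, $|\lambda|$, and $M_n$, and your ordering gives $\ell_j^{-|\mu_j|}\le \ell_i^{-|\mu_j|}$ for every active factor, yielding $|D^\lambda\varphi_i|\le C_\lambda\,\ell_i^{-|\lambda|}$ as claimed.
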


To define the \textit{Minkowski content}, let  $\alpha : \Omega \rightarrow [0,n]$ be a measurable function and $F\subset \mathbb{R}^n$. For $\varepsilon > 0$, we write
$$
F_\varepsilon = \{x\in \mathbb{R}^n \:|\: \di ( x , F )< \varepsilon\},
$$
where  $\di (x, F)$ denote the Euclidean distance from a point $x \in \mathbb{R}^n$ to $F$.

\begin{definition}
The $(\alpha , \vl )$-\textnormal{upper Minkowski content} of $F$ is defined by
$$
\mathcal{M} ^\alpha  _{\vl} (F) = \limsup _{\varepsilon \rightarrow 0^+} \int _{F_\varepsilon} \varepsilon^{\alpha-n}\dvt,
$$
where $\vl \in W(\Omega)$ and  $\dvt = \vl \dx$. 
\end{definition}

Now, we proceed to define the \textit{net measure}. For $x\in \mathbb{R}^n$ and $\ell >0$, we denote by
$$
Q(x,\ell)=\left\{ y \in \mathbb{R} ^n \:|\: |x_i-y_i|\leqslant \ell /2, \ i=1, \cdots,n\right\},
$$
the cube of center $x$ and length $\ell$. Recall that a \textit{dyadic cube} is represented as
\begin{equation*} \label{29}
\left[m_1 2^{-k} , \left(m_1 +1 \right)2^{-k}\right]\times \cdots \times \left[m_n 2^{-k} , \left(m_n +1 \right)2^{-k}\right],
\end{equation*}
where $k, m_1, \ldots, m_n $ are integers.   

For a cube  $Q$  of length $\ell (Q) = \ell$, we let $\frac{3}{2}Q$ denote the cube with the same center and length $3\ell /2$.  For any integer $k$ let $G_k$ be a collection of cubes represented by a form 
$$
\left[m_1 2^{-k} , (m_1 +1)2^{-k}\right]\times \cdots \times \left[m_n 2^{-k} , (m_n +1)2^{-k}\right],
$$
for some integers $m_1, \ldots, m_n$. 

Let $\gamma >0$ be  fixed. For  each $\varepsilon >0$ we write
\begin{equation*}
\mathcal{L}^\varepsilon _{\alpha , \gamma , \vl} (F) = \inf \left\{ \sum _{j} \left[\int _{Q_j} \left(2^{-k_j} \right)^{\alpha - n} \dvt \right]^{\gamma} \:|\:  F\subset \inter \left( \cup _j Q_j\right), \
Q_j \in G_{k_j}, \ 2^{-k_j} < \varepsilon  \right\} . 
\end{equation*}

\begin{definition}
The $(\alpha , \gamma , \vl)$-\textnormal{net measure} of $F$ is defined by
$$
\mathcal{L} _{\alpha , \gamma , \vl} (F) = \lim _{\varepsilon \rightarrow 0^+} \mathcal{L}^\varepsilon _{\alpha , \gamma , \vl} (F).
$$
\end{definition}

We finish this section, introducing the following condition used in this work.

\begin{condt} \label{70}
Assume $p\in L^\infty _+ (\Omega)$ and $\vl\in W _p (\Omega)$. Suppose   $\varphi \in W^{1,p(\cdot)} (\Omega , \vl)$ has compact support $\spt \varphi =K$. Then there exist functions $\varphi _m \in C^{\infty} (\Omega)$, with $\spt \varphi _m \subset K_{\di (K,\partial \Omega) /2}$, such that
$$
\varphi _m \rightarrow \varphi \quad \text { in }  W^{1,p(\cdot)} (\Omega , \vl).
$$
\end{condt}

\begin{rem}
In the case that $\vl \equiv 1$ and $p\in \mathcal{P} ^{\log} (\Omega)$ is bounded, the Assumption \ref{70} is verified, see \cite[Corollary 11.2.4]{dien}. On the other hand, if $p\in \mathcal{P} ^{\log} (\Omega)$ is bounded and $\vl$ satisfies the  Muckenhoupt condition, see \cite{cruz, dien1}, following the same lines of the proof  of \cite[Lemma 2.4]{mill} we obtain the Assumption \ref{70}; for auxiliary results see \cite[Theorem 2.1.4]{tur}, \cite[Appendix C.5]{evans}, \cite[Theorem 1.1]{cruz} and \cite[Theorem 1.1]{dien1}. Finally, if $p\equiv \operatorname{constant}$ and $\vl$ belongs to Muckenhoupt's $A_p$-class,  the Assumption \ref{70} is satisfied;  see \cite[Theorem 2.5]{kil}.
\end{rem}

\section{Main results} \label{66}

In the sequel $E\subset \Omega$ is a compact set. For us a function is \textit{integrable} if it has an integral (which may equal $\pm \infty$).
\begin{definition}
Let $\vl \in W_p (\Omega)$ and  $p \in L^\infty _+ (\Omega)$. We define
\begin{multline*}
 W^{1, p(\cdot)} _{E,\loc}  \left(\Omega  , \vl \right)=\left\{u:\Omega \rightarrow \overline{\mathbb{R}} \:|\: u \text { is integrable, } u|_{\Omega\backslash E} \in W^{1 , p(\cdot)} _{\loc} (\Omega \backslash E , \vl), \right.\\
 \left.\text { and there is a integrable function }  \widetilde{\nabla u} : \Omega \rightarrow  \overline{\mathbb{R}}^n \text { so that } \widetilde{\nabla u} = \nabla u \text { \textit{a.e.} on }  \Omega\backslash E \right\}.
\end{multline*}
\end{definition}

Now, we will divide our main results into two sections.

\subsection{Singular set satisfying $\mathcal{M} ^\alpha _{\vl} (E)=0$} First, we will work with nonlinear elliptic equations of the form

\begin{equation} \label{1}
-\operatorname{div}  \ao \left( x , u , \nabla u \right) + \bo (x ,u)= 0.
\end{equation}

We assume that $\ao:\Omega \times [0,\infty) \times \mathbb{R}^n \rightarrow \mathbb{R}^n$ and $\bo:\Omega \times [0,\infty) \rightarrow \mathbb{R}$ are measurable functions. Furthermore, there exists $\mu >0$ such that the following conditions are satisfied almost everywhere:
\begin{gather}
\left\langle \ao (x,u,\eta) , \eta \right\rangle \geqslant \left( \mu   |\eta|^{p(x)}   - \mu ^{-1}  u^{p(x)}  + a_1 (x) \right)\vl (x), \label{3}\\
\left|\ao (x,u,\eta)\right| \leqslant  \left(\mu ^{-1} |\eta|^{p(x)-1}  + \mu ^{-1} u ^{p(x) -1}   + a_2 (x) \right) \vl (x), \label{4}\\
|\bo (x,u)| \leqslant \left(\mu ^{-1} u^{p(x) -1}  + a_3 (x) \right)\vl (x), \label{5}
\end{gather}
where  $\vl \in W_p(\Omega)$, $a_1 \in L^{q_1} (\Omega , \vl)$,  $a_2 \in L^{q_2} (\Omega , \vl)$, $a_2\geqslant 0$, $a_3 \in L^{q_3} (\Omega , \vl)$, $a_3\geqslant 0$,  $\alpha, \beta :\Omega \rightarrow [0,n]$ are measurable functions, and $ p,  q_1, q_2, q_3 \in L^\infty _+ (\Omega)$. Furthermore,
\begin{gather}
 \left( p\beta \right) ^+ \leqslant n-\alpha ^+ -p^+ -1, \label{24}\\
 q_1 ^- , \ q_3 ^- \geqslant \left(\frac{p}{p-1}\right)^+, \ q_2 ^- \geqslant \max\left\{ \left(\frac{p}{p-1} \right) ^+ , \frac{n-\alpha ^+}{n-\alpha ^+ -\beta ^+ -2}\right\}. \label{8}
\end{gather}
\begin{definition}
We will say that $u \in W^{1, p(\cdot)} _{E,\loc}  \left(\Omega , \vl \right)$ is a (weak) sub-solution  of equation \eqref{1} in $\Omega \backslash E$ if  
\begin{equation} \label{2}
\begin{split}
 \int_{\Omega \backslash E } \left\langle \ao (\cdot , u , \nabla u) , \nabla \varphi \right\rangle   +   \bo ( \cdot , u ) \varphi  \dx \leqslant 0
\end{split}
\end{equation}
for all  $\varphi \in W^{1, p(\cdot)}\left(\Omega  , \vl \right) $, $\varphi \geqslant 0$, with compact support in $\Omega \backslash E$.
\end{definition}

\begin{definition}
We will say that the sub-solution $u$ of equation \eqref{1} in $\Omega \backslash E$ has a removable singularity at  $E$: if $u \in W^{1 , p(\cdot)} _{E, \loc}  \left(\Omega  , \vl \right)$ implies $u \in W^{1, p(\cdot)} _{\loc}  \left(\Omega ,  \vl \right)$ and the equality \eqref{2} is fulfilled for all $\varphi \in W^{1 , p(\cdot)}  \left(\Omega  , \vl \right)$,  $\varphi \geqslant 0$, with compact support.
\end{definition}

Inspired by  Theorem \ref{68} we have our first main result.
 
\begin{theorem} \label{27}
Suppose that \eqref{3} - \eqref{8} are satisfied, furthermore $\mathcal{M} ^\alpha _{ \vl} (E)=0$. If $u$ is a non-negative sub-solution of \eqref{1} in $\Omega \backslash E$, and 
\begin{equation}\label{25}
u \leqslant C d(\cdot,E)^{-\beta}  \quad \text { \textit{a.e.} on  } \Omega \backslash E,
\end{equation}
where $C$ is a suitable positive constant, then the singularity of $u$ at $E$ is removable. 
\end{theorem}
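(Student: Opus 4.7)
The plan is to adapt the Harvey--Polking--Riihentaus removability strategy to the weighted variable exponent setting. Given a nonnegative $\varphi\in W^{1,p(\cdot)}(\Omega,\vl)$ with compact support in $\Omega$, I will first approximate $\varphi$ by smooth functions $\varphi_m$ via Assumption \ref{70}. For each small $\varepsilon>0$ I then construct a cutoff $\psi_\varepsilon$ vanishing near $E$: cover $E_{\varepsilon/2}$ by a finite disjoint family of dyadic cubes $\{Q_i\}$ of side $\ell_i\sim\varepsilon$ with $\bigcup_i \frac{3}{2}Q_i\subset E_{c\varepsilon}$ for a fixed $c$, apply Lemma \ref{26} to obtain $\varphi_i\in C_c^\infty(\frac{3}{2}Q_i)$, and set $\psi_\varepsilon:=1-\sum_i\varphi_i$. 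Then $\psi_\varepsilon\in[0,1]$, $\psi_\varepsilon\equiv 0$ on $\bigcup_iQ_i\supset E_{\varepsilon/2}$, $\psi_\varepsilon\equiv 1$ outside $\bigcup_i \frac{3}{2}Q_i$, and $|\nabla\psi_\varepsilon|\leqslant C/\varepsilon$ is supported on the shell $T_\varepsilon\subset E_{c\varepsilon}\setminus E_{\varepsilon/2}$, where in particular $d(\cdot,E)\asymp\varepsilon$. The product $\psi_\varepsilon\varphi_m$ then has compact support in $\Omega\setminus E$ and is admissible in \eqref{2}, which I expand as
$$\int\psi_\varepsilon\langle\ao,\nabla\varphi_m\rangle\,\dx+\int\bo\,\psi_\varepsilon\varphi_m\,\dx+\int\varphi_m\langle\ao,\nabla\psi_\varepsilon\rangle\,\dx\leqslant 0.$$

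Dominated convergence, justified by \eqref{4}, \eqref{5}, \eqref{25}, the hypothesis $u\in W^{1,p(\cdot)}_{E,\loc}(\Omega,\vl)$, and $\int_{E_\varepsilon}\dvt\to 0$ (a consequence of $\mathcal{M}^\alpha_{\vl}(E)=0$), drives the first two integrals to $\int\langle\ao,\nabla\varphi_m\rangle+\bo\,\varphi_m\,\dx$ as $\varepsilon\to 0^+$, while Assumption \ref{70} will afterwards allow $m\to\infty$ to recover \eqref{2} for $\varphi$ itself. The whole effort therefore reduces to proving
$$R_\varepsilon:=\int\varphi_m\langle\ao,\nabla\psi_\varepsilon\rangle\,\dx\longrightarrow 0.$$
By \eqref{4} and the support property of $\nabla\psi_\varepsilon$,
$$|R_\varepsilon|\leqslant\frac{C\|\varphi_m\|_\infty}{\varepsilon}\int_{T_\varepsilon}\bigl(|\nabla u|^{p(x)-1}+u^{p(x)-1}+a_2\bigr)\dvt.$$
Since $\varepsilon<1$ and $\alpha\leqslant\alpha^+$, the Minkowski hypothesis yields $\int_{E_\varepsilon}\dvt=o(\varepsilon^{n-\alpha^+})$; combined with $u^{p(x)-1}\leqslant Cd(\cdot,E)^{-\beta(p(x)-1)}\leqslant C\varepsilon^{-(\beta(p-1))^+}$ on $T_\varepsilon$, the $u^{p(\cdot)-1}$ piece is $o(\varepsilon^{\,n-\alpha^+-1-(\beta(p-1))^+})$, which vanishes by \eqref{24}. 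The $a_2$ piece is controlled by the H\"older inequality \eqref{31} with exponents $q_2$ and $q_2'$ together with Proposition \ref{32}, using the first lower bound on $q_2^-$ in \eqref{8}.

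The $|\nabla u|^{p(\cdot)-1}$ piece is the delicate one. I will first derive a weighted variable-exponent Caccioppoli-type bound
$$\int_{T_\varepsilon}|\nabla u|^{p(x)}\dvt\leqslant C\,\varepsilon^{\,n-\alpha^+-p^+-(p\beta)^+}$$
by testing \eqref{2} against $u\,\eta^{p^+}$ for an auxiliary cutoff $\eta\in C_c^\infty(E_{2c\varepsilon}\setminus E_{\varepsilon/4})$ with $\eta\equiv 1$ on $T_\varepsilon$ and $|\nabla\eta|\leqslant C/\varepsilon$; the coercivity \eqref{3}, the growth \eqref{4}--\eqref{5}, the pointwise bound \eqref{25}, and Young's inequality together allow me to absorb the $|\nabla u|^{p(x)}$ terms appearing on the right-hand side. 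Applying \eqref{31} and Proposition \ref{32} then converts this into $\int_{T_\varepsilon}|\nabla u|^{p(\cdot)-1}\dvt=O(\varepsilon^{1+\delta})$ for some $\delta>0$, so that even after dividing by $\varepsilon$ this contribution to $R_\varepsilon$ is $O(\varepsilon^\delta)=o(1)$.

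The main obstacle will be precisely this Caccioppoli-and-H\"older step combined with the variable-exponent bookkeeping: one must repeatedly shuttle between modulars $\rho_{p,\vl}$ and Luxemburg norms $\|\cdot\|_{L^{p(\cdot)}(\Omega,\vl)}$ via Proposition \ref{32}, carefully separating the $p^-$ versus $p^+$ branches, and verify that the ``$-p^+-1$'' slack in \eqref{24} together with the second lower bound on $q_2^-$ in \eqref{8} are exactly what force every exponent of $\varepsilon$ arising in the final estimate to be strictly positive. Once $R_\varepsilon\to 0$ is secured, the limits $\varepsilon\to 0^+$ and then $m\to\infty$ extend \eqref{2} to every admissible $\varphi$ on $\Omega$. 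Finally, a dyadic shell decomposition of $E_\delta$ using \eqref{25}, \eqref{24} and $\mathcal{M}^\alpha_{\vl}(E)=0$ places $u$ in $L^{p(\cdot)}_{\loc}(\Omega,\vl)$, and the same cutoff argument applied to test functions of the form $\phi\,e_i$ identifies $\widetilde{\nabla u}$ with the distributional gradient of $u$ on all of $\Omega$, thereby placing $u$ in $W^{1,p(\cdot)}_{\loc}(\Omega,\vl)$ and completing the removability.
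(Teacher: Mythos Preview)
Your plan assembles essentially the same ingredients as the paper—the cutoff built from Lemma~\ref{26}, the Caccioppoli estimate obtained by testing with $u\,\eta^{p^{+}}$, the dyadic shell decomposition for $u\in L^{p(\cdot)}_{\loc}$, and the variable-exponent H\"older/modular bookkeeping—so the strategy is sound. There is, however, one genuine gap.

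You never establish $|\widetilde{\nabla u}|\in L^{p(\cdot)}_{\loc}(\Omega,\vl)$. Your Caccioppoli inequality bounds $\int_{T_\varepsilon}|\nabla u|^{p}\,\dvt$ on a \emph{single} annulus $T_\varepsilon$, but that alone does not give integrability of $|\nabla u|^{p}$ across a full neighbourhood $E_r$. This missing integrability is needed in two places. First, your dominated-convergence step for $\int\psi_\varepsilon\langle A,\nabla\varphi_m\rangle\,\dx$ requires $|A(\cdot,u,\nabla u)|\,|\nabla\varphi_m|\in L^1$ on a neighbourhood of $E$; by \eqref{4} this forces $|\nabla u|^{p-1}\vl\in L^{1}_{\loc}(\Omega)$, whereas the hypothesis $u\in W^{1,p(\cdot)}_{E,\loc}$ only gives this on $\Omega\setminus E$. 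Second, your closing sentence asserts that identifying $\widetilde{\nabla u}$ with the distributional gradient ``thereby plac[es] $u$ in $W^{1,p(\cdot)}_{\loc}(\Omega,\vl)$'', but membership in $W^{1,p(\cdot)}_{\loc}$ by definition requires $|\nabla u|\in L^{p(\cdot)}_{\loc}$, which you have not shown.

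The repair is exactly the paper's Step~2: run the shell Caccioppoli on each dyadic annulus $\{2^{-(j+1)}r\le d(\cdot,E)\le 2^{-j}r\}$ and \emph{sum over $j$}. Because \eqref{24} forces the exponent $n-\alpha^{+}-p^{+}-(p\beta)^{+}$ to be at least $1$ (and the $a_1,a_2,a_3$ contributions likewise carry exponent $\ge 1$ by \eqref{8}), the series is geometric and yields $\int_{E_r}|\widetilde{\nabla u}|^{p}\,\dvt\le C\varepsilon^{\gamma_1}r$. This summation must be carried out \emph{before} the limiting argument, so that both the dominated convergence and the final $W^{1,p(\cdot)}_{\loc}$ conclusion are legitimate. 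With that reordering and the added summation, your argument goes through; the paper's organization of Step~4 (partition of unity on all of $\spt\varphi$, split into cubes meeting $E$ and cubes away from $E$) differs cosmetically from your direct test function $\psi_\varepsilon\varphi_m$, but the substance is the same.
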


\begin{proof} 1. Let $\varepsilon \in(0,1)$ be arbitrary. Since $M^{\alpha} _{\vl}(E)=0$,  there is $r_{0} \in(0,\varepsilon)$ such that $\overline{E_{r_0}} \subset \Omega$ and
\begin{equation}\label{11}
\int_{E_{r}} r^{\alpha-n} \dvt<\varepsilon \quad \text { for all }  r \in\left[0, r_{0}\right].
\end{equation}
For $r \in\left[0, r_{0}\right]$ and $j=0,1, \ldots$, we write
$$
K _j=\left\{x \in \mathbb{R}^{n} \:|\: \di (x, E)< 2^{-j} r \right\},
$$
then
$$
E_{r}=\bigcup _{j=0}^{\infty}\left(K_{j} \backslash K_{j+1}\right),
$$
and, by \eqref{25}, \eqref{11} and \eqref{24},
$$
\begin{aligned}
&	\int_{K_{j} \backslash K_{j+1}} u^{p} \dvt  \leqslant \int_{K_{j} \backslash K_{j+1}} C^p \di( \cdot , E)^{-\beta p} \dvt 
	\leqslant C_1 \left(p\right) \int_{K_{j} \backslash K_{j+1}}\left(2^{-(j+1)} r \right)^{-\beta p} \dvt \\  
&	= C_1\int_{K_{j} \backslash K_{j+1}} 2^{\beta p}  \left(2^{-j}r\right)^{n-\alpha-\beta p} \left( 2^{-j} r\right)^{\alpha -n} \dvt 	\leqslant C_1  2^{(\beta p)^{+}}  \left(2^{-j}r\right) ^{n-\alpha^{+}-(\beta p )^{+} } \varepsilon \\
 & \leqslant C_2 (p,\beta)  2^{-j -1 }r^{p^+ +1}\varepsilon .
\end{aligned}
$$
Therefore, considering $|E|=0$,
\begin{equation}\label{10}
\int_{E_{r}}u^{p} \dvt   = \sum_{j=0}^{\infty} \int_{K_{j} \backslash K_{j+1}} u^{p} \dvt \leqslant C_2 \varepsilon  r^{p^+ + 1},
\end{equation}
for all $r \in\left[0, r_{0}\right]$. Thus $u \in L^{p(\cdot)} _{\loc} \left(\Omega, \vl\right)$. \\

2. Now, we prove $|\widetilde{\nabla u}| \in L^{p(\cdot)} _{\loc}\left(\Omega, \vl\right)$. For $r<2 r_{0} /5$, let $\psi_{r}: \mathbb{R} \rightarrow \mathbb{R}$ be a smooth function such that
$$
\psi_{r}(t)= \left\{
\begin{aligned}
&0 & & \text { if } t<r/2 \text { or } t>5r/2, \\
 &1 & & \text { if } r<t<2 r,
\end{aligned}
\right.
$$ 
$0 \leqslant \psi_{r} \leqslant 1$ and $\left|\psi_{r}^{\prime}\right| \leqslant C_3 / r$, where $C_3$ is a positive constant. Set
$$
 \varphi=\left(\psi_{r}^{p^{+}} \circ \di(\cdot, E)\right) u.
$$
We have $\varphi \in W^{1, p(\cdot)}\left(\Omega , \vl \right)$, $\varphi \geqslant 0$, with compact support in $\Omega \backslash E$. For simplicity we write $\psi_{r}=\psi_{r} \circ \di (\cdot, E)$ and $\psi_{r}^{\prime}=\psi_{r}^{\prime} \circ \di (\cdot, E)$. Substituting $\varphi$ into \eqref{2},
$$
\int_{\Omega \backslash E}\left\langle \ao(\cdot, u, \nabla u), \psi_{r}^{p^{+}} \nabla u  + p^{+}  \psi_{r}^{p^{+}-1} \psi_{r}^{\prime} u \nabla \di(\cdot, E)\right\rangle 
+  \bo \left( \cdot , u \right) \psi _r ^{p^+} u \dx \leqslant 0.
$$
By conditions \eqref{3} - \eqref{8},
\begin{equation} \label{67}
 q_1 ^- \geqslant \frac{n-\alpha ^+ }{n-\alpha ^+ - 1 }, \ q_2 ^- \geqslant \frac{n-\alpha ^+}{n-\alpha ^+ -\beta ^+ -2}, \ q_3 ^-\geqslant \frac{n-\alpha ^+}{n-\alpha ^+ - \beta ^+ -1}
\end{equation}
and
$$
\begin{aligned}
&\int_{\left\{ \frac{r}{2} \leqslant \di(\cdot, E) \leqslant \frac{5 r}{2}\right\} }  \psi_{r}^{p^{+}} \left( \mu   |\nabla u|^{p}  - \mu ^{-1} u^p  + a_1   \right)\dvt \\
&\leqslant  \int_{\left\{\frac{r}{2} \leqslant \di( \cdot , E) \leqslant \frac{5 r}{2}\right\}} \frac{C_3}{r} p^{+} \psi_{r}^{p^{+}-1}  \left( \mu^{-1}|\nabla u|^{p-1} + \mu ^{-1} u^{p-1}   + a_2  \right) u \dvt \\
&+\int_{\left\{\frac{r}{2} \leqslant \di( \cdot , E) \leqslant \frac{5 r}{2}\right\}} \psi_{r}^{p^{+}}  \left( \mu^{-1} u^{p-1}  + a_3   \right) u \dvt
\end{aligned}
$$
Consequently, by  \eqref{10},  \eqref{31}, \eqref{57}, Young's inequality and  \eqref{25},
$$
\begin{aligned}
&\int_{\left\{ r \leqslant \di(\cdot , E) \leqslant 2r\right\} } |\nabla u|^{p} \dvt \leqslant C_4 \varepsilon \frac{5r}{2}\\ 
&+ C_4 r^{\left( n-\alpha ^+ \right)\left(\frac{q_1^- - 1}{q_1 ^-} \right)} \|a_1\| _{L^{q_1 (\cdot)} (\Omega , \vl)} \left[\int _{E_{5r / 2}} \left(\frac{5r}{2}\right)^{\alpha-n}\dvt\right] ^{\left(\frac{q_1 -1}{q_1}\right) ^-}\\
& + C_4 \int_{\left\{ \frac{r}{2} \leqslant \di(\cdot , E) \leqslant \frac{5 r}{2}\right\}  } C_5 \left(\varepsilon_{1} , p \right)\left(\frac{u}{r}\right)^{p}  +\varepsilon_{1} \psi_{r}^{\frac{(p^+ -1)p}{p-1}}|\nabla u|^{p} \dvt\\
& + C_4 r^{\left( n-\alpha ^+ \right)\left(\frac{q_2 ^- - 1}{q_2 ^-} \right)-\beta ^+ -1} \|a_2\| _{L^{q_2 (\cdot)} (\Omega , \vl)} \left[\int _{E_{5r /2}} \left(\frac{5r}{2}\right)^{\alpha-n} \dvt\right] ^{\left(\frac{q_2 -1}{q_2}\right) ^-}\\
& + C_4 r^{\left( n-\alpha ^+ \right)\left(\frac{q_3^- - 1}{q_3 ^-} \right) -\beta ^+} \|a_3\| _{L^{q_3 (\cdot)} (\Omega , \vl)} \left[\int _{E_{5r /2}} \left(\frac{5r}{2}\right)^{\alpha-n}\dvt\right] ^{\left(\frac{q_3 -1}{q_3}\right) ^-},
\end{aligned}
$$
where $C_4 = C_4 \left(\mu  , \alpha , p , C_2   \right)>0$. Take $\varepsilon_{1}=\frac{1}{2 C_4}$. From \eqref{67},  \eqref{25}, \eqref{11}, \eqref{24}, we have
\begin{equation*}
\frac{1}{2} \int_{\{r \leqslant \di(\cdot , E) \leqslant 2 r\}}|\nabla u|^{p} \dvt 
\leqslant C_6 \varepsilon ^{\gamma _1} r   + C_6  \varepsilon r^{n-\alpha ^+ - (p\beta )^{+} - 1}  \leqslant 2 C_6  \varepsilon ^{\gamma _1} r.
\end{equation*}
Therefore,
\begin{equation}  \label{14}
\int _{E_r } |\widetilde{\nabla u}|^p \dvt
 = \sum_{i=0}^{\infty} \int_{\left\{ 2^{-(i+1)} r \leqslant \di (\cdot, E)  \leqslant 2^{-i} r\right\}}|\nabla u|^{p} \dvt
 \leqslant  C_7 \varepsilon ^{\gamma _1} r,
\end{equation}
where $C_i = C_i (\mu , \alpha  , \beta , p    , a_1 , a_2 , a_3 )>0$, $i=6,7$, $\gamma _1= \gamma _1 (q_1, q_2 , q_3) \in (0,1)$. Hence  $|\widetilde{\nabla u}| \in L^{p(\cdot)}_{\loc} (\Omega , \vl)$. \\

3. Recall from step 1 and 2 that $u\in L^{p(\cdot)} _{\loc} (\Omega , \vl)$ and  $|\widetilde{\nabla u}|\in L^{p(\cdot)} _{\loc} (\Omega , \vl)$. We now claim $u\in W_{\loc}^{1, p(\cdot)}\left(\Omega, \vl \right)$.  To verify this assertion, fix $\varphi\in C_0 ^{\infty} (\Omega)$, we assume that $|\varphi| \leqslant 1$ and $|\nabla \varphi| \leqslant 1$. For $r\in (0,r_0 /2)$, let $\zeta _r : \mathbb{R} \rightarrow \mathbb{R}$ be a smooth function such that
$$
\zeta _r (t) = \left\{\begin{aligned}
&1 & & \text { if } |t|\leqslant r,\\
&0 & & \text { if } 2r \leqslant |t|,
\end{aligned}
\right.
$$
$0\leqslant \zeta _r \leqslant 1$ and $|\zeta _r ^\prime| \leqslant C_8 /r$, where $C_8$ is a positive constant.  For simplicity we write $\zeta _r = \zeta _r \circ \di (\cdot , E)$ and $\zeta _r ^\prime =  \zeta ^\prime _r \circ \di (\cdot , E)$. We have 
$$
\int _{\Omega \backslash E} u \left[\left(1-\zeta _r\right)\varphi  \right] _{x_i}\dx=- \int _{\Omega \backslash E} u_{x_i}\left(1-\zeta _r\right)\varphi  \dx,
$$
where $i=1, \ldots, n$. If $r\rightarrow 0^+$:
\begin{equation}\label{56}
\int _{\Omega} u \varphi _{x_i}\dx=- \int _{\Omega } u_{x_i}\varphi  \dx.
\end{equation}
Indeed, \eqref{31},  \eqref{57} and \eqref{10}  implies
\begin{align*}
&\left|\int _{\Omega \backslash E} u \left(\zeta _r \right)_{x_i} \varphi \dx\right|\leqslant 2 \left\|\frac{C_8 u}{r} \vl ^{1/p}\right\| _{L^{p(\cdot)} (E_{2r})} \left\|\vl ^{-1/p}\right\|_{L^{p^{\prime}(\cdot)}(E_{2r})} \\
&\leqslant  C_{9}(p,\beta) \left\|\vl ^{-1 / p}\right\|_{L^{p^{\prime}(\cdot)}\left(E_{r_0}\right)} (\varepsilon r )^{1 / p^+} \rightarrow 0 \quad \text { as }  r \rightarrow 0^+,
\end{align*}
since $\vl \in W_{p} (\Omega)$. So \eqref{56} is proved. \\

4. To complete the proof of the theorem. Let $\varphi \in W^{1, p(\cdot)}\left(\Omega, \vl \right)$ be with compact support and $\varphi \geqslant 0$. By  Assumption \ref{70}, we consider the case  $\varphi \in C^{\infty} (\Omega)$,  $\varphi \leqslant 1$ and $|\nabla \varphi| \leqslant 1$. We show
$$
\int_{\Omega}\left\langle \ao ( \cdot , u, \nabla u), \nabla \varphi \right \rangle   + \bo ( \cdot , u) \varphi  \dx\leqslant 0.
$$
Let $K= \spt\varphi$. We may suppose that $\overline{K_{r_{0}} }\subset \Omega$. Choose $\ell = 2^{-k}$ so small that $3 \ell \sqrt{n} < r_{0}$. Cover $K$ by a finite collection of dyadic cubes $Q_i$ with same length $\ell (Q_i)= \ell $, $i=1, \ldots, N$. Moreover, we can assume that 
$$
\begin{aligned}
\frac{3}{2} Q_{i} \cap E &\neq \emptyset  \quad \text { for }  i=1, \ldots, N_{\ast}, \\
\frac{3}{2} Q_{i} \cap E &=\emptyset \quad \text { for }  i=N^{\ast}+1, \ldots, N,
\end{aligned}
$$
for some $N_{\ast} \in \mathbb{N}$, $1 \leqslant N^{\ast} \leqslant N$. By Lemma \ref{26}, we have there exist non-negative functions  $\varphi_i \in C_0 ^{\infty}\left(\mathbb{R}^{n}\right)$ with  $\spt \varphi _i \subset\frac{3}{2} Q_i$ such that $\sum_{i=1}^{N} \varphi_i=1$ on $\cup_{i=1}^N Q_i \supset K$. Furthermore,  $\left|\nabla \varphi_i \right| \leqslant C_{10} (n)\ell^{-1}$, $i=1, \ldots, N$, on  $\mathbb{R}^{n}$. Since $u$ is a sub-solution of \eqref{1} in $\Omega \backslash E$, and  $\varphi \varphi _i \in W^{1, p(\cdot)}\left(\Omega ,  \vl \right)$, $\varphi \varphi _i \geqslant 0$, $i=N_\ast +1, \ldots, N$ has compact support in $\Omega \backslash E$, we have
\begin{equation} \label{63}
\int_{\Omega \backslash E}\left\langle \ao (\cdot , u, \nabla u), \nabla\left(\varphi \varphi_i \right)\right\rangle + \bo( \cdot, u)\left(\varphi \varphi_i \right) \dx\leqslant 0,
\end{equation}
Then, by \eqref{4} and \eqref{5},
\begin{align}
&\int_{\Omega} \left\langle \ao (\cdot , u, \nabla u), \nabla \varphi\right\rangle  + \bo ( \cdot , u) \varphi  \dx \nonumber 
=\sum_{i=1}^{N} \int_{\frac{3}{2} Q_i}\left\langle \ao ( \cdot , u, \nabla u), \nabla\left(\varphi \varphi_i \right)\right\rangle   + \bo (\cdot, u) \left(\varphi \varphi_i\right)   \dx \nonumber\\
&\leqslant  \sum_{i=1}^{N_\ast} \int_{\frac{3}{2} Q_i}\left\langle \ao (\cdot , u, \nabla u), \nabla\left(\varphi \varphi_i\right)\right\rangle  + \bo (\cdot , u) \left(\varphi \varphi_i \right) \dx \nonumber \\
&\leqslant  \sum_{i=1}^{N_{\ast}} \int_{\frac{3}{2} Q_i} \mu ^{-1}(2+ C_{10}) \ell ^{-1}\left( |\nabla u|^{p-1} +u^{p-1}  \right) +(1+C_{10})\ell ^{-1} a_2  + a_3\dvt \nonumber\\
&\leqslant  C_{11} (n,\mu)\sum_{i=1}^{N_\ast}  \int_{\frac{3}{2} Q_i} \ell ^{-1}  \left(|\nabla u|^{p} + 1\right)  + \ell ^{-1} \left(u^p + 1\right)  + \ell ^{-1} a_2  + a_3\dvt \nonumber \\  
&=C_{11} \int_{E_{3 \ell \sqrt{n}}} \ell ^{-1}  \left(\sum_{i=1}^{N_{\ast}} \chi _{\frac{3}{2} Q_i} \right) |\nabla u|^p    +  \ell ^{-1} \left(\sum_{i=1}^{N_{\ast}} \chi _{\frac{3}{2} Q_i} \right) u^{p} \dvt \nonumber \\
&+C_{11}  \int_{E_{3 \ell \sqrt{n}}} \ell ^{-1} \left(\sum_{i=1}^{N_{\ast}} \chi _{\frac{3}{2} Q_i} \right)   + \left(\sum_{i=1}^{N_{\ast}} \chi _{\frac{3}{2} Q_i} \right)  \dvt \nonumber\\
&+C_{11}  \int_{E_{3 \ell \sqrt{n}}} \ell ^{-1} \left(\sum_{i=1}^{N_{\ast}} \chi _{\frac{3}{2} Q_i} \right) a_2  + \left(\sum_{i=1}^{N_{\ast}} \chi _{\frac{3}{2} Q_i} \right) a_3\dvt. \label{15} 
\end{align}
Here $\chi _{\frac{3}{2} Q_i}$ is the characteristic function of $\frac{3}{2} Q_i$, $i=1, \ldots, N_\ast$.  For each cube $Q_i$, $i=1, \ldots, N_\ast$, there are at most $3^n$ cubes $Q_j$, $\ell (Q_j) = \ell$,  $j=1, \ldots, N_i \leqslant 3^{n}$ (adjacent cubes to $Q_i$ with equal length), such that $\frac{3}{2} Q_i \cap \frac{3}{2} Q_j \neq \emptyset$. Then
\begin{equation}\label{28}
\sum_{i=1}^{N_{\ast}} \chi_{\frac{3}{2} Q_i} \leqslant 3^n  \quad \text {  on  }  E_{ 3 \ell \sqrt{n}}.
\end{equation}
By \eqref{15}, \eqref{28}, \eqref{14}, \eqref{10}, \eqref{11}, \eqref{31} and \eqref{67}, we have
$$
\begin{aligned}
&\int_{\Omega}\left\langle \ao(\cdot, u, \nabla u), \nabla \varphi \right\rangle  +  \bo (\cdot, u) \varphi  \dx 
\leqslant (C_7+C_2) C_{11}\ell^{-1} 3^{n}(3 \ell \sqrt{n}) \varepsilon ^{\gamma_1} 
+C_{12}\ell^{-1} 3^{n}(3 \ell \sqrt{n})^{n-\alpha ^+} \varepsilon\\
&+ C_{11} \ell ^{-1}  3^n 2 (3 \ell\sqrt{n})^{(n-\alpha^+)\left(\frac{q_{2}^{-}-1}{q_{2}^{-}}\right)}\left\|a_{2}\right\|_{ L^{q_{2} (\cdot)}\left(\Omega, \vl \right)} \varepsilon ^{\gamma_1} 
 + C_{11}   3^n 2 (3 \ell\sqrt{n})^{(n-\alpha^+)\left(\frac{q_3^{-}-1}{q_3^{-}}\right)}\left\|a_3\right\|_{ L^{q_3 (\cdot)}\left(\Omega, \vl \right)} \varepsilon ^{\gamma_1}  \\
 &\leqslant C_{13} \varepsilon ^{\gamma_1},
\end{aligned}
$$
where $C_{12}$, $C_{13}$ are positive constants independents of $\varepsilon$. Since $\varepsilon \in (0,1)$ was arbitrary, it follows that 
$$
\int_{\Omega} \left\langle \ao (\cdot , u, \nabla u), \nabla \varphi\right\rangle  + \bo ( \cdot , u) \varphi  \dx \leqslant 0.
$$
This completes the proof of Theorem \ref{27}.
\end{proof}

\subsection{Singular set satisfying $\mathcal{L} _{\alpha , \gamma , \vl} (E)=0$}  In this section we will work with nonlinear elliptic equations of the form

\begin{equation} \label{60}
-\operatorname{div}  \ao \left( x , \nabla u \right)= 0.
\end{equation}

We assume that $A:\Omega \times \mathbb{R}^n \rightarrow \mathbb{R}^n$ is a measurable function. Furthermore, there exists $\mu >0$ such that the following conditions are satisfied almost everywhere:
\begin{gather}
\left\langle \ao (x,\eta) , \eta \right\rangle \geqslant \mu   |\eta|^{p(x)}  \vl (x), \label{45}\\
\left|\ao (x,\eta)\right| \leqslant  \mu ^{-1} |\eta|^{p(x)-1} \vl (x), \label{46}
\end{gather}
where  $\vl\in W_p (\Omega)$,  $\alpha, \tau :\Omega \rightarrow [0,n]$ are measurable functions, and $p\in L^\infty _+ (\Omega)$. Moreover,
\begin{gather} 
\tau \leqslant n-\alpha - p^+ - p^+/p^- \quad \text { \textit{a.e.} on }  \Omega .\label{30}
\end{gather}

\begin{definition}
We will say that $u \in W^{1, p(\cdot)} _{E,\loc}  \left(\Omega ,  \vl \right)$ is a (weak) sub-solution  of equation \eqref{60} in $\Omega \backslash E$ if  
\begin{equation} \label{61}
\begin{split}
 \int_{\Omega \backslash E } \left\langle \ao (\cdot , \nabla u) , \nabla \varphi \right\rangle  \dx  \leqslant 0
\end{split}
\end{equation}
for all  $\varphi \in W^{1 , p(\cdot)}\left(\Omega , \vl \right)$, $\varphi \geqslant 0$, with compact support in $ \Omega \backslash E$.
\end{definition}

\begin{definition}
We will say that the  sub-solution $u$ of equation \eqref{60} in $\Omega \backslash E$ has a removable singularity at  $E$: if $u \in W^{1, p(\cdot)} _{E, \loc}  \left(\Omega ,  \vl \right)$ implies $u \in W^{1 , p(\cdot)} _{\loc}  \left(\Omega  , \vl \right)$ and the equality \eqref{61} is fulfilled for all $\varphi \in W^{1, p(\cdot)}  \left(\Omega ,  \vl \right)$, $\varphi \geqslant 0$, with compact support.
\end{definition}

\begin{lemma}
Suppose that there is a  locally bounded function $\sigma: \Omega \times \Omega \rightarrow (0,\infty)$   such that
\begin{equation} \label{22}
 \int _{Q (y , \ell )} \ell  ^{-\tau} \dvt \leqslant \sigma   \left(y,z\right) \int _{Q (z , \ell)} \ell ^{-\tau} \dvt,
\end{equation}
for all adjacent dyadic cubes $Q (y , \ell ), Q (z , \ell)\subset \Omega$. If $K\subset \Omega$ is a compact set, then
\begin{gather}
\int _{\frac{3}{2} Q (y , \ell)} \ell  ^{-\tau} \dvt \leqslant C(n , \sigma , K) \int _{Q (z , \ell)} \ell ^{-\tau} \dvt, \label{49}\\
\int _{B(y , \ell )} \ell  ^{-\tau} \dvt \leqslant C(n, \sigma , K) \int _{Q(y , \ell )} \ell ^{-\tau} \dvt  \label{54},
\end{gather}
where $y=z$ or $Q (y , \ell ), Q (z, \ell)\subset \Omega$ are adjacent dyadic cubes,  with $y, z \in K$ and  $\ell \sqrt{n} < \di (K,\partial \Omega)$.
\end{lemma}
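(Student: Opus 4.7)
The plan is to reduce both estimates \eqref{49} and \eqref{54} to iterated applications of the adjacency hypothesis \eqref{22}, using the geometric observation that both the enlarged cube $\tfrac{3}{2}Q(y,\ell)$ and the Euclidean ball $B(y,\ell)$ are contained in the cube $Q(y,3\ell)$, which is tiled by $Q(y,\ell)$ together with its at most $3^n-1$ dyadic neighbors of the same side length.

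First I would fix the covering explicitly. Let $y_1,\ldots,y_m$ (with $m\leqslant 3^n-1$) be the centers of the dyadic cubes of side $\ell$ that share at least a vertex with $Q(y,\ell)$, and set $y_0=y$. Elementary geometry yields
$$
\tfrac{3}{2}Q(y,\ell)\subset Q(y,3\ell)=\bigcup_{j=0}^{m}Q(y_j,\ell),\qquad B(y,\ell)\subset Q(y,2\ell)\subset Q(y,3\ell).
$$

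Second, I would absorb $\sigma$ into a uniform constant. Under $\ell\sqrt{n}<\di(K,\partial\Omega)$ every neighboring center satisfies $|y-y_j|\leqslant\ell\sqrt{n}$, so the collection $\{y_0,y_1,\ldots,y_m\}$ lies in a fixed compact subset $K'\subset\Omega$ determined solely by $K$, $\Omega$ and $n$. Local boundedness of $\sigma$ on $\Omega\times\Omega$ then produces a constant $M=M(n,\sigma,K)$ with $\sigma(\eta,\xi)\leqslant M$ for every $\eta,\xi\in K'$. Applying \eqref{22} to each pair $(y_j,y)$ for $j=1,\ldots,m$ (iterating through a short chain of face-adjacent cubes if \eqref{22} is only available in that restricted form) yields
$$
\int_{Q(y_j,\ell)}\ell^{-\tau}\dvt \leqslant M^{n}\int_{Q(y,\ell)}\ell^{-\tau}\dvt,
$$
and summing over $j=0,\ldots,m$, combined with the coverings above, simultaneously proves \eqref{49} in the case $y=z$ and \eqref{54}. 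For the remaining case of \eqref{49}, where $Q(y,\ell)$ and $Q(z,\ell)$ are genuinely adjacent, one further application of \eqref{22} exchanges $\int_{Q(y,\ell)}\ell^{-\tau}\dvt$ for a constant multiple of $\int_{Q(z,\ell)}\ell^{-\tau}\dvt$.

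The only delicate point, which I expect to be the main technical obstacle, is verifying that the compact subset $K'$ can be chosen independently of $\ell$ under the standing hypothesis $\ell\sqrt{n}<\di(K,\partial\Omega)$, so that $M$ and hence the constants in \eqref{49}--\eqref{54} depend only on $n$, $\sigma$ and $K$; this is handled by fixing a compact neighborhood of $K$ at positive distance from $\partial\Omega$ and noting that all centers $y_j$ arising in the argument lie inside it.
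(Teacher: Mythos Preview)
The paper states this lemma without proof, so there is no authorial argument to compare against. Your approach---covering $\tfrac{3}{2}Q(y,\ell)$ and $B(y,\ell)$ by the at most $3^{n}$ dyadic cubes of side $\ell$ that tile $Q(y,3\ell)$, then chaining \eqref{22} along short adjacency paths and bounding $\sigma$ uniformly on a fixed compact enlargement $K'\times K'\subset\Omega\times\Omega$---is exactly the intended one and is correct.

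One minor technical remark: for \eqref{22} to apply to the neighboring cubes you need each $Q(y_j,\ell)\subset\Omega$, and for the bound $M=\sup_{K'\times K'}\sigma$ to be finite and $\ell$-independent you need all the centers $y_j$ to lie in a compact set $K'\subset\Omega$ that does not depend on $\ell$. The stated hypothesis $\ell\sqrt{n}<\di(K,\partial\Omega)$ only barely guarantees $y_j\in\Omega$, and as $\ell\sqrt{n}\uparrow\di(K,\partial\Omega)$ the centers may approach $\partial\Omega$, so strictly speaking the constant could degenerate. This is an imprecision in the lemma's hypotheses rather than a defect in your proof; in every application later in the paper one has $3\ell\sqrt{n}<\varepsilon<r_0$ with $\overline{K_{r_0}}\subset\Omega$, which gives exactly the buffer your argument needs.
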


\begin{lemma}
Assume that \eqref{45} - \eqref{30}, \eqref{22} are satisfied, and $\mathcal{L} _{\alpha , \gamma , \vl} (E) =0$ for some $\gamma \in (0,1/p^+]$. If $u$ is a non-negative sub-solution of \eqref{60} in $\Omega \backslash E$ and satisfies
\begin{equation}\label{18}
\int _{B(y,r) } u^p \dvt \leqslant C_\ast \int _{B\left(y, r  / \sqrt{n}\right) }  r^{-\tau} \dvt,
\end{equation}
for all ball $B(y,r)\subset \subset \Omega$, where $C_\ast$ is a suitable positive constant. Then 
\begin{equation}\label{23}
\begin{aligned}
&\min \left\{ \left( \int _{B\left(y,r_1\right)} |\widetilde{\nabla u}|^p \dvt \right) ^{\frac{1}{p^-}}, \left( \int _{B\left(y,r_1\right)} |\widetilde{\nabla u}|^p \dvt \right) ^{\frac{1}{p^+}}\right\}\\
& \leqslant C_0 \max\left\{ \left[\left(r_2 - r_1 \right)^{-p^+}\int _{B(y,r_2 / \sqrt{n})}  r_2 ^{-\tau} \dvt \right]^{\frac{1}{p^-}}  ,  \left[\left(r_2 - r_1 \right)^{-p^+}\int _{B(y,r_2 / \sqrt{n})}  r_2 ^{-\tau} \dvt \right]^{\frac{1}{p^+}}   \right\}
\end{aligned}
\end{equation}
for all ball $B(y,r_2)\subset \subset \Omega$ with $0< r_2 - r_1 <1$, where $C_0 = C_0(n ,\mu , p , \sigma , E , C_\ast)>0$.
\end{lemma}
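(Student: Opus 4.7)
The plan is to run a Caccioppoli-type estimate with a double cutoff: one standard annular cutoff $\eta$ between the balls $B(y,r_1)$ and $B(y,r_2)$, and a second cutoff $\psi$ that vanishes on a neighbourhood of $E$ and is built from the covering furnished by $\mathcal{L}_{\alpha,\gamma,\vl}(E)=0$. Fix $\varepsilon\in(0,1)$. By the definition of the $(\alpha,\gamma,\vl)$-net measure, there is a cover $E\subset \inter(\cup_j Q_j)$ by (after a standard thinning, pairwise disjoint) dyadic cubes $Q_j$ of sidelength $\ell_j=2^{-k_j}<\varepsilon$ such that
$$
\sum_j\Bigl[\int_{Q_j}\ell_j^{\alpha-n}\dvt\Bigr]^{\gamma}<\varepsilon.
$$
Apply Lemma \ref{26} to $\{Q_j\}$ to obtain $\varphi_j\in C_0^\infty(\mathbb{R}^n)$ with $\spt\varphi_j\subset\tfrac{3}{2}Q_j$, $\sum_j\varphi_j=1$ on $\cup_jQ_j$ and $|\nabla\varphi_j|\leqslant C\ell_j^{-1}$; then set $\psi=1-\sum_j\varphi_j$. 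Also choose a smooth cutoff $\eta$ with $\eta=1$ on $B(y,r_1)$, $\spt\eta\subset B(y,r_2)$ and $|\nabla\eta|\leqslant C/(r_2-r_1)$.

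Test the sub-solution inequality \eqref{61} with $\varphi=u\,\eta^{p^+}\psi^{p^+}$, which is non-negative, lies in $W^{1,p(\cdot)}(\Omega,\vl)$ and has compact support in $\Omega\setminus E$. Expanding $\nabla\varphi$ and invoking the structure conditions \eqref{45}, \eqref{46}, followed by Young's inequality (with a small parameter $\varepsilon_1$) absorbed into the left-hand side (using $\eta,\psi\in[0,1]$ and $p\leqslant p^+$ to reconcile the powers $\eta^{p^+-1}$, $\psi^{p^+-1}$), yields a Caccioppoli estimate of the form
$$
\frac{\mu}{2}\int \eta^{p^+}\psi^{p^+}|\nabla u|^{p}\dvt
\;\leqslant\; C\int u^{p}|\nabla\eta|^{p}\dvt
\;+\;C\int u^{p}|\nabla\psi|^{p}\dvt.
$$
The first term on the right is controlled by $(r_2-r_1)^{-p^+}\int_{B(y,r_2)} u^p\dvt$, and \eqref{18} converts this into the desired expression $(r_2-r_1)^{-p^+}\int_{B(y,r_2/\sqrt{n})}r_2^{-\tau}\dvt$.

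The main obstacle is the \emph{singular-set term} $\int u^{p}|\nabla\psi|^{p}\dvt$, which must be shown to vanish as $\varepsilon\to 0^+$. Using $|\nabla\psi|^{p(x)}\leqslant C(n,p^+)\sum_j\ell_j^{-p^+}\chi_{\frac{3}{2}Q_j}$ (the overlap of the dilated cubes is bounded), one gets
$$
\int u^{p}|\nabla\psi|^{p}\dvt
\;\leqslant\; C\sum_j \ell_j^{-p^+}\int_{\frac{3}{2}Q_j}u^{p}\dvt.
$$
Bounding each $\frac{3}{2}Q_j$ by a ball and applying \eqref{18}, then using \eqref{49} to return from the ball to the cube, and finally using \eqref{30} (which gives $\ell_j^{-\tau}\leqslant\ell_j^{p^++p^+/p^-}\ell_j^{\alpha-n}$ since $\ell_j<1$), one obtains
$$
\ell_j^{-p^+}\int_{\frac{3}{2}Q_j}u^{p}\dvt
\;\leqslant\; C\ell_j^{p^+/p^-}\,a_j,\qquad a_j:=\int_{Q_j}\ell_j^{\alpha-n}\dvt.
$$
Since $\gamma\in(0,1/p^+]$, the hypothesis $\sum_j a_j^{\gamma}<\varepsilon$ forces both $a_j<\varepsilon^{1/\gamma}$ and $\ell_j<\varepsilon$; interpolating $a_j=a_j^{\gamma}\cdot a_j^{1-\gamma}$ and summing gives $\sum_j\ell_j^{p^+/p^-}a_j\to 0$ as $\varepsilon\to 0^+$.

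Passing $\varepsilon\to 0^+$ (with Fatou on the left, where $\eta^{p^+}\psi^{p^+}\nearrow \eta^{p^+}$ a.e. on $\Omega\setminus E$ since $|E|=0$) produces
$$
\int_{B(y,r_1)}|\widetilde{\nabla u}|^{p}\dvt
\;\leqslant\; C(n,\mu,p,\sigma,E,C_\ast)\,(r_2-r_1)^{-p^+}\int_{B(y,r_2/\sqrt{n})}r_2^{-\tau}\dvt.
$$
Finally, converting this modular inequality into the norm inequality \eqref{23} is a direct application of \eqref{57}, which produces the $\min$ on the left and the $\max$ on the right with exponents $1/p^-$ and $1/p^+$.
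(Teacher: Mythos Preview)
Your proof has a genuine gap at the step where you control the singular-set term $\int u^{p}|\nabla\psi|^{p}\,\dvt$. You assert that $|\nabla\psi|^{p(x)}\leqslant C(n,p^+)\sum_j\ell_j^{-p^+}\chi_{\frac{3}{2}Q_j}$ because ``the overlap of the dilated cubes is bounded,'' but for disjoint dyadic cubes of \emph{different} sidelengths this is false. In one dimension, with $x=1/7$, the disjoint dyadic intervals $L_k=[a_k,a_{k+1}]$ of length $8^{-k}$, where $a_0=-1$, $a_1=0$, $a_2=1/8$, $a_3=9/64$, $a_4=73/512,\ldots$, all satisfy $x\in\tfrac{3}{2}L_k$ (check: the right endpoint of $\tfrac{3}{2}L_k$ is $(5a_{k+1}-a_k)/4$, which exceeds $1/7$ for every $k$), so the overlap at $x$ is unbounded. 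The equal-length bound $\sum\chi_{\frac{3}{2}Q_i}\leqslant 3^n$ used in step~4 of Theorem~\ref{27} does not apply here, since the net-measure cover mixes scales. Without bounded overlap you cannot pass from $|\sum_j\nabla\varphi_j|^{p}$ to $C\sum_j|\nabla\varphi_j|^{p}$ with a constant independent of the cover, and your estimate on the singular term collapses. (The overlap is in fact $O(\log(1/\varepsilon))$, which would still let the term vanish against $\varepsilon^{1/\gamma}$, but that is a separate argument you did not give.)

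The paper sidesteps this issue by working in \emph{norm} rather than modular form. After applying the generalized H\"older inequality \eqref{31} to $\int p^{+}\xi^{p^+-1}|\nabla\xi|\,u\,|\nabla u|^{p-1}\,\dvt$, the factor $\bigl\||\nabla\xi|\,u\,\vl^{1/p}\bigr\|_{L^{p(\cdot)}}$ is split by the \emph{triangle inequality} into $\sum_i\bigl\||\nabla\varphi_i|\,u\,\vl^{1/p}\bigr\|_{L^{p(\cdot)}(\frac{3}{2}Q_i)}$ (and analogous adjacent-cube terms) plus the annular contribution; no overlap control is needed. Each summand is then bounded, via \eqref{57}, \eqref{18}, \eqref{49} and \eqref{30}, by a constant times $a_i^{1/p^+}$ with $a_i=\int_{Q_i}\ell_i^{\alpha-n}\,\dvt$. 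Since $a_i<1$ and $\gamma\leqslant 1/p^+$, one gets $a_i^{1/p^+}\leqslant a_i^{\gamma}$ and hence $\sum_i a_i^{1/p^+}\leqslant\sum_i a_i^{\gamma}<\varepsilon$. This is exactly where the hypothesis $\gamma\in(0,1/p^+]$ is used, and it is the mechanism your Young-inequality route cannot replicate without an overlap bound.
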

\begin{proof}
1. Let $r_2 >r_1>0$ and $y\in \Omega $ be such that $\overline{B(y,r_2)}\subset \Omega$ and $r_2 - r_1 <1$. Choose $r_0 \in (0,1)$ such that $\overline{E_{r_0}}\subset \Omega$. Take $\varepsilon \in (0,r_0)$ be arbitrarily. Since $\mathcal{L}_{\alpha , \gamma , \vl}(E)=0$ there is a finite collection of mutually disjoint dyadic cubes $Q_i=Q(y_i , \ell _i)$ so that  $E \subset \inter \left(\cup_{i=1}^{N} Q_{i}, \right)$,
\begin{equation} \label{33}
\sum_{i=1}^{N} \left(\int_{Q_{i}} \ell _i ^{\alpha-n} \dvt\right)^\gamma<\varepsilon,
\end{equation}
$3\ell_{i} \sqrt{n}< \varepsilon$ and $Q_i \cap E \neq \emptyset$, $i=1, \ldots, N$.
 
Now, we attach to each cube $Q_{i}$, $\ell\left(Q_{i}\right)=\ell _{i}$, $i=1, \ldots, N$, all adjacent dyadic cubes with the same length $\ell _i$. Since two dyadic cubes are either mutually disjoint or one is contained in the other, we may drop extra cubes away. Proceeding in this way we get a collection of mutually disjoint cubes $Q_{j,k} = Q (y_{j,k} , \ell _{j,k})$,  $(j,k)\in \{1, \ldots, N_1\} \times \{1, \ldots, m_j\}$ satisfying 
\begin{equation} \label{16}
\ell _{j,k}=\ell_{a_j}, \quad  j=1, \ldots, N_1 \leqslant N, \quad k=1, \ldots, m_j \leqslant 3^{n},
\end{equation}
where $\{1\leqslant a_1 < \cdots < a_{N_1} \leqslant N\} \subset \mathbb{Z}$, and furthermore $Q_{j,k}$ is adjacent to $Q_{a_j}$.

Additionally, by \eqref{18} and \eqref{49} ,
\begin{equation} \label{53}
 \int _{ \frac{3}{2}Q_{j,k}}   u^p \dvt  \leqslant C_1  \int _{ Q_{a_{j}}}  \ell _{a_{j}} ^{-\tau} \dvt,
\end{equation}
$j=1, \ldots, N_1$, $k=1, \ldots, m_j$, for some constant $C_1= C_1 \left(n , \sigma , E_{2r_0 /3} , C_\ast\right)>0$.\\

2.  We next choose non-negative test functions $\varphi_{i}$, $\varphi _{j,k}$, $i=1, \ldots N$, $j=1, \ldots, N_1$, $k=1, \ldots, m_{j}$, given by the  Lemma \ref{26}, with the following properties:
\begin{gather}
 \spt \varphi_{i} \subset \frac{3}{2} Q_{i}, \quad \left|\nabla \varphi _{i}\right| \leqslant C_2 (n)\ell _{i} ^{-1}, \label{41} \\
 \spt \varphi _{j,k} \subset \frac{3}{2} Q_{j,k}, \quad \left|\nabla \varphi _{j,k}\right| \leqslant C_2 (n) \ell _{j,k} ^{-1}, \label{42}\\
 \sum_{i}\varphi_{i} + \sum _{j,k}\varphi _{j,k}=1 \quad  \text { on } \left(\cup_{i } Q_{i }\right) \cup \left(\cup _{j,k} Q_{j,k} \right).\label{43}
\end{gather}
Let  $\psi _0: \mathbb{R}^n\rightarrow \mathbb{R}$ be a smooth function with $\spt \psi _0 \subset B(y,r_2)$, $0 \leqslant \psi _0 \leqslant 1$, $|\nabla \psi _0| \leqslant C_3 (n) (r_2 - r_1)^{-1}$, and $\psi=1$ on $B(y,r_1)$. Write
$$
\xi= \psi _0 \left[1- \left( \sum_{i} \varphi_{i} + \sum _{j,k} \varphi _{j,k}\right)\right].
$$
Substituting $\xi ^{p^+} u$ into \eqref{61}, we see
\begin{gather*}
\int_{\Omega \backslash E}\left\langle A(\cdot , \nabla u), \nabla\left(\xi^{p^+} u\right)\right\rangle \dx \leqslant 0,\nonumber\\
\int _{\Omega \backslash E} \xi ^{p^+} \langle A(\cdot , \nabla u) , \nabla u\rangle \dx\leqslant \int _{\Omega \backslash E} p^+ \xi ^{p^+ -1}|u A(\cdot , \nabla u) ||\nabla \xi| \dx. \label{64} 
\end{gather*}
Then, by  \eqref{45}, \eqref{46}, \eqref{31} and \eqref{57},
$$
\begin{aligned}
&\int _{\Omega \backslash E } \xi ^{p^+} |\nabla u|^p \dvt \leqslant \mu ^{-2}\int _{\Omega \backslash E}  p^+ \xi ^{p^+ -1} |\nabla \xi| |\nabla u|^{p-1 } u\dvt \\
& \leqslant 2 \mu ^{-2} p^{+} \left\|\xi ^{p^{+} - 1} |\nabla u| ^{p-1} \vl^{\frac{p-1}{p}} \right\|_{L^{\frac{p}{p-1} (\cdot)} \left( \Omega \backslash E \right)}
 \left\| |\nabla \xi| u \vl^{\frac{1}{p}} \right\|_{L^{p (\cdot)} \left( \Omega\right)}\\
 & \leqslant 2 \mu ^{-2} p^{+} \max\left\{ \left( \int _{\Omega \backslash E} \xi ^{p^+}|\nabla u|^p \dvt \right) ^{\left(\frac{p-1}{p}\right)^+}, \left( \int _{\Omega \backslash E} \xi ^{p^+}|\nabla u|^p \dvt \right) ^{\left(\frac{p-1}{p}\right)^-}\right\}\\
& \cdot \left[   \sum_{i}  \left\| |\nabla \varphi _i| u \vl^{\frac{1}{p}}\right\|_{L^{p(\cdot)}\left( \frac{3}{2}Q_{i} \right)} + \sum _{j,k} \left\| \left|\nabla \varphi _{j,k}\right|u\vl^{\frac{1}{p}}\right\|_{L^{p(\cdot)}\left(\frac{3}{2}Q_{j,k} \right)}    
  + \left\| \left|\nabla \psi _0 \right| u\vl^{\frac{1}{p}} \right\|_{L^{p(\cdot)} \left(B\left(y , r_2\right)\right)} \right].
\end{aligned}
$$
Consequently, using \eqref{57}, \eqref{18}, \eqref{49}, \eqref{53}, \eqref{30} and \eqref{33}, we compute 
\begin{align*}
&\min \left\{ \left( \int _{\Omega \backslash E} \xi ^{p^+}|\nabla u|^p \dvt \right) ^{1-\left(\frac{p-1}{p}\right)^+}, \left( \int _{\Omega \backslash E} \xi ^{p^+}|\nabla u|^p \dvt \right) ^{1-\left(\frac{p-1}{p}\right)^-}\right\}\\
 & \leqslant C_4 (n, \mu , p ) \max\left\{ \left[\left(r_2 - r_1 \right)^{-p^+}\int _{B(y,r_2)} u^p \dvt \right]^{\frac{1}{p^-}}  ,  \left[\left(r_2 - r_1 \right)^{-p^+}\int _{B(y,r_2)}  u^p \dvt \right]^{\frac{1}{p^+}}   \right\}\\
& + C_4\sum _{i}\max\left\{ \left(\ell _{i}^{-p^+}\int _{\frac{3}{2}Q_{i}} u^p \dvt \right)^{\frac{1}{p^-}}  ,  \left(\ell _{i}^{-p^+}\int _{\frac{3}{2}Q_{i}}  u^p \dvt \right)^{\frac{1}{p^+}}   \right\}\\
& + C_4\sum _{j,k}\max\left\{ \left(\ell _{j,k}^{-p^+}\int _{\frac{3}{2}Q_{j,k}} u^p \dvt \right)^{\frac{1}{p^-}}  ,  \left(\ell _{j,k}^{-p^+}\int _{\frac{3}{2}Q_{j,k}}  u^p \dvt \right)^{\frac{1}{p^+}}   \right\}\\
& \leqslant C_5 \max\left\{ \left[\left(r_2 - r_1 \right)^{-p^+}\int _{B(y,r_2 / \sqrt{n})}  r_2 ^{-\tau} \dvt \right]^{\frac{1}{p^-}}  ,  \left[\left(r_2 - r_1 \right)^{-p^+}\int _{B(y,r_2 / \sqrt{n})}  r_2 ^{-\tau} \dvt \right]^{\frac{1}{p^+}}   \right\}
 + C_6 \varepsilon,
\end{align*}
where $C_5 = C_5 \left(n,\mu , p , \sigma , E_{2r_0 /3}  , C_\ast \right)>0$ and  $C_6>0$ is independent of $\varepsilon$. Hence,
\begin{align*}
&\min \left\{ \left( \int _{\Omega \backslash E} \xi ^{p^+} |\nabla u|^p \dvt \right) ^{\frac{1}{p^-}}
, \left( \int _{\Omega \backslash E}\xi ^{p^+}|\nabla u|^p \dvt \right) ^{\frac{1}{p^+}}\right\}\\
& \leqslant C_5 \max\left\{ \left[\left(r_2 - r_1 \right)^{-p^+}\int _{B(y,r_2 / \sqrt{n})}  r_2 ^{-\tau} \dvt \right]^{\frac{1}{p^-}}  ,  \left[\left(r_2 - r_1 \right)^{-p^+}\int _{B(y,r_2 / \sqrt{n})}  r_2 ^{-\tau} \dvt \right]^{\frac{1}{p^+}}   \right\}
+ C_6 \varepsilon.
\end{align*}
Since $\varepsilon \in (0,1)$ was arbitrary:
\begin{align*}
&\min \left\{ \left( \int _{B\left(y,r_1\right)} |\widetilde{\nabla u}|^p \dvt \right) ^{\frac{1}{p^-}}, \left( \int _{B\left(y,r_1\right)} |\widetilde{\nabla u}|^p \dvt \right) ^{\frac{1}{p^+}}\right\}\\
& \leqslant C_5 \max\left\{ \left[\left(r_2 - r_1 \right)^{-p^+}\int _{B(y,r_2 / \sqrt{n})}  r_2 ^{-\tau} \dvt \right]^{\frac{1}{p^-}}  ,  \left[\left(r_2 - r_1 \right)^{-p^+}\int _{B(y,r_2 / \sqrt{n})}  r_2 ^{-\tau} \dvt \right]^{\frac{1}{p^+}}   \right\}.
\end{align*}
Therefore, we conclude \eqref{23}.
\end{proof}

Motivated by Theorem \ref{69} we have our last result.

\begin{theorem} \label{62}
Assume that \eqref{45} - \eqref{30}, \eqref{22} and  \eqref{18} are satisfied, furthermore $\mathcal{L} _{\alpha , \gamma , \vl} (E)=0$ for some $\gamma \in (0, 1 / p^+]$. If $u$ is a non-negative sub-solution of \eqref{60} in $\Omega \backslash E$, then the singularity of $u$ at $E$ is removable. 
\end{theorem}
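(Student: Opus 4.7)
The plan is to follow the four-stage structure of the proof of Theorem \ref{27}, but with every distance-based cutoff replaced by the dyadic partition of unity that was already introduced in the proof of the preceding lemma. Given $\varepsilon\in(0,1)$, the hypothesis $\mathcal{L}_{\alpha,\gamma,\vl}(E)=0$ supplies the disjoint cubes $\{Q_i\}$ as in \eqref{33}, with $3\ell_i\sqrt{n}<\varepsilon$ and $Q_i\cap E\neq\emptyset$, together with the adjacent cubes $Q_{j,k}$ and bumps $\varphi_i,\varphi_{j,k}$ from \eqref{41}--\eqref{43}. The repeatedly used arithmetic fact is this: if $a_i:=\int_{Q_i}\ell_i^{\alpha-n}\dvt$, then \eqref{33} and $\gamma\leqslant 1/p^+\leqslant 1$ force $a_i\leqslant a_i^\gamma<\varepsilon<1$, whence $\sum_i a_i\leqslant\sum_i a_i^\gamma<\varepsilon$.

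For the first two stages, I establish $u\in L^{p(\cdot)}_{\loc}(\Omega,\vl)$ and $|\widetilde{\nabla u}|\in L^{p(\cdot)}_{\loc}(\Omega,\vl)$. Applying \eqref{18} and \eqref{54} to a ball containing $Q_i$, combined with \eqref{30} and $\ell_i<1$, yields
\begin{equation*}
\int_{Q_i}u^p\dvt\leqslant C\int_{Q_i}\ell_i^{-\tau}\dvt\leqslant C\,\ell_i^{p^++p^+/p^-}\int_{Q_i}\ell_i^{\alpha-n}\dvt,
\end{equation*}
and the arithmetic fact then gives $\int_{\cup_iQ_i}u^p\dvt\leqslant C\varepsilon^{1+\delta}$ for some $\delta>0$; coupled with $u\in W^{1,p(\cdot)}_{\loc}(\Omega\setminus E,\vl)$ this produces $u\in L^{p(\cdot)}_{\loc}(\Omega,\vl)$. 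For the gradient, cover any compact $K\subset\Omega$ by finitely many balls $B(y_j,r_j/2)\subset\subset B(y_j,r_j)\subset\subset\Omega$ and apply \eqref{23} on each; the right-hand side is finite because \eqref{30} ensures local integrability of $r_j^{-\tau}\vl$.

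Next, define $\eta_\varepsilon:=1-\sum_i\varphi_i-\sum_{j,k}\varphi_{j,k}$, which vanishes on a neighborhood of $E$ and satisfies $\eta_\varepsilon\to 1$ a.e.\ as $\varepsilon\to 0^+$. To verify that the distributional gradient of $u$ coincides with $\widetilde{\nabla u}$ across $E$, test integration by parts against $\eta_\varepsilon\varphi$ for an arbitrary $\varphi\in C_0^\infty(\Omega)$ normalized so that $|\varphi|,|\nabla\varphi|\leqslant 1$. The only nontrivial term as $\varepsilon\to 0^+$ is $\int u\varphi\bigl(\sum\nabla\varphi_i+\sum\nabla\varphi_{j,k}\bigr)\dx$, which by \eqref{31}, \eqref{57}, \eqref{41}--\eqref{42} and the previous step is of order $\varepsilon^\delta$; hence $u\in W^{1,p(\cdot)}_{\loc}(\Omega,\vl)$. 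For the final stage, fix a non-negative $\varphi\in W^{1,p(\cdot)}(\Omega,\vl)$ with compact support, which by Assumption \ref{70} may be taken smooth with $\varphi\leqslant 1$ and $|\nabla\varphi|\leqslant 1$, and split $\varphi=\varphi\eta_\varepsilon+\varphi(1-\eta_\varepsilon)$. The contribution $\int\langle \ao(\cdot,\nabla u),\nabla(\varphi\eta_\varepsilon)\rangle\dx\leqslant 0$ is immediate because $\varphi\eta_\varepsilon$ is admissible in \eqref{61}; the remainder is estimated via \eqref{46}, H\"older's inequality \eqref{31}, and the Caccioppoli bound \eqref{23} applied on each $B(y_i,2\ell_i\sqrt{n})$, and the arithmetic fact turns this into $C\varepsilon^{\delta'}$ for some $\delta'>0$. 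Letting $\varepsilon\to 0^+$ closes the argument.

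The main obstacle is the final step: the dyadic cubes $\{Q_i\}$ may have arbitrarily disparate sizes $\ell_i$, so the naive summation of the cube-by-cube remainders of the form $\ell_i^{-1}\|\nabla u\|_{L^{p-1}(\frac{3}{2}Q_i)}$ need not converge. The mechanism that salvages the argument is the interplay between the exponent restriction $\gamma\leqslant 1/p^+$ (which converts $\sum a_i^\gamma<\varepsilon$ into $\sum a_i<\varepsilon$) and the slack $p^++p^+/p^-$ built into \eqref{30}, together with the cube-level Caccioppoli estimate \eqref{23}; these ingredients are precisely what is needed to turn each cube's contribution into something comparable to $a_i$ up to controlled constants.
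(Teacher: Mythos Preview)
Your proposal is correct and takes essentially the same route as the paper: the same dyadic covers $Q_i,Q_{j,k}$ from $\mathcal{L}_{\alpha,\gamma,\vl}(E)=0$, the same Caccioppoli bound \eqref{23}, and the same partition-of-unity cutoffs from Lemma~\ref{26}. The one cosmetic difference is that the paper covers $K\setminus\bigl((\cup_i Q_i)\cup(\cup_{j,k}Q_{j,k})\bigr)$ by further dyadic cubes $\widetilde{Q}_h$ disjoint from $E$ and tests with each $\varphi\widetilde{\varphi}_h$, whereas you observe more directly that $\varphi\eta_\varepsilon$ is itself admissible in \eqref{61}; also, your Stage~1 is more elaborate than needed since $u\in L^{p(\cdot)}_{\loc}(\Omega,\vl)$ is immediate from \eqref{18}.
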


\begin{proof}

1. Let $\varphi \in W^{1,  p(\cdot)}(\Omega  , \vl)$ be with compact support. By  Assumption \ref{70} we can suppose that $\varphi \in C^{\infty} (\Omega)$ and $|\nabla \varphi|\leqslant 1$. Write $K=\spt \varphi$, choose $r_0 \in (0, 1)$ such that $\overline{K_{r_0}} \subset \Omega$. Let $\varepsilon \in (0,r_{0})$ be arbitrarily given.  Since $\mathcal{L}_{\alpha , \gamma , \vl}(E)=0$,  there is a finite collection of mutually disjoint dyadic cubes $Q_{i}(y_i , \ell _i)$, $3\sqrt{n}\ell_{i}<\varepsilon$, $i=1, \ldots, N$, so that $E \subset \inter  \left(\cup_{i=1}^{N} Q_{i} \right)$ and
\begin{equation} \label{37} 
\sum_{i=1}^{N}\left(  \int_{Q_{i}} \ell _i ^{\alpha-n} \dvt \right) ^\gamma <\varepsilon.
\end{equation}

We will cover $K$ by a suitable collection of mutually disjoint dyadic cubes:

\noindent We attach to each cube $Q_{i}$, $\ell\left(Q_{i}\right)=\ell _{i}$, $i=1, \ldots, N$, all adjacent dyadic cubes with the same length $\ell _i$. Since two dyadic cubes are either mutually disjoint or one is contained in the other, we may drop extra cubes away. Proceeding in this way we get a collection of mutually disjoint cubes $Q_{j,k} = Q(y_{j,k} , \ell _{j,k})$, $(j,k)\in \{1, \ldots, N_1\} \times \{1, \ldots, m_j\}$ satisfying 
\begin{equation} \label{38}
\ell _{j,k}=\ell_{a_j}, \quad  j=1, \ldots, N_1 \leqslant N, \quad k=1, \ldots, m_j \leqslant 3^{n},
\end{equation}
where $\{1\leqslant a_1 < \cdots < a_{N_1} \leqslant N\} \subset \mathbb{Z}$, and furthermore $Q_{j,k}$ is adjacent to $Q_{a_j}$.

\noindent Finally, we cover the remaining set $K \backslash\left[\left(\cup_{i} Q_{i}\right)\cup \left(\cup_{j , k } Q_{j , k }\right) \right]$ by mutually disjoint dyadic cubes $\widetilde{Q}_h$, all with the same length $\ell \left(\widetilde{Q}_h \right)=\ell _0$, $h=1, \ldots, N_2$, where $\ell _0 = \min \left\{\ell _{i}\:|\: i=1, \ldots, N\right\}$. We see
$$
\frac{3}{2} \widetilde{Q}_h \cap E=\emptyset \quad \text { for }   h=1, \ldots, N_2 .
$$\\

2. From  \eqref{23} we have  $|\widetilde{\nabla u}|\in L^{p(\cdot)} _{\loc}(\Omega , \vl)$. We claim $u\in W_{\loc}^{1, p(\cdot)}\left(\Omega, \vl \right)$.  To verify this assertion, let $\psi _i$, $\psi _{j,k}$, $i=1, \ldots, N$, $j=1, ..., N_1$, $k=1, ..., m_j$  be non-negative test functions given by the Lemma \ref{26}, satisfying 
\begin{gather*}
\spt \psi_{i} \subset \frac{3}{2} Q_{i}, \quad \left|\nabla \psi _{i}\right| \leqslant C_1 (n) \ell _{i} ^{-1},  \\
\spt \psi_{j,k} \subset \frac{3}{2} Q_{j,k}, \quad \left|\nabla \psi _{j,k}\right| \leqslant C_1 (n) \ell _{j,k} ^{-1},  \\
\sum_{i}\psi_{i} + \sum_{j,k}\psi_{j,k}  =1 \quad  \text {  on }   \left(\cup _i Q_i \right) \cup\left(\cup _{j,k} Q_{j,k} \right).
\end{gather*}
Write 
$$
\zeta = \sum_{i}\psi_{i} + \sum_{j,k}\psi_{j,k}. 
$$
We have 
$$
\int _{\Omega \backslash E} u \left[\left(1-\zeta \right)\varphi  \right] _{x_b}\dx=- \int _{\Omega \backslash E} u_{x_b}\left(1-\zeta \right)\varphi  \dx,
$$
where $b=1, \ldots, n$. If $\varepsilon\rightarrow 0^+$:
\begin{equation}\label{59} 
\int _{\Omega} u \varphi _{x_b}\dx=- \int _{\Omega } u_{x_b}\varphi  \dx.
\end{equation}
Indeed, \eqref{31}, \eqref{32}, \eqref{18}, \eqref{49}, \eqref{30} and \eqref{37} implies
\begin{align*}
&\left|\int _{\Omega \backslash E} u \zeta _{x_b}\varphi  \dx \right|\leqslant 2 \left\|u\zeta _{x_b} \vl ^{1/p}\right\| _{L^{p(\cdot)} \left( \Omega\right)} \left\|\vl ^{-1/p}\right\|_{L^{p^{\prime}(\cdot)}  \left( \Omega\right)} \\
&\leqslant C_2 (\vl)\sum _i \left\|u\left|\nabla \psi _{i}\right| \vl ^{1/p}\right\| _{L^{p(\cdot)} \left( \frac{3}{2} Q_{i}\right)} + C_2 \sum _{j,k} \left\|u\left|\nabla \psi _{j,k}\right| \vl ^{1/p}\right\| _{L^{p(\cdot)} \left( \frac{3}{2} Q_{j,k}\right)}\\
& \leqslant C_3 (\vl , n) \sum _i \max \left\{\left(\ell _{i} ^{-p^+} \int _{\frac{3}{2}Q_{i}} u^p \dvt \right)^{\frac{1}{p^-}}    ,   \left(\ell _{i} ^{-p^+}\int _{\frac{3}{2}Q_{i}}  u^p \dvt \right)^{\frac{1}{p^+}} \right\}\\
& + C_3 \sum _{j,k} \max \left\{\left(\ell _{j,k} ^{-p^+} \int _{\frac{3}{2}Q_{j,k}} u^p \dvt \right)^{\frac{1}{p^-}}    ,   \left(\ell _{j,k} ^{-p^+}\int _{\frac{3}{2}Q_{j,k}}  u^p \dvt \right)^{\frac{1}{p^+}} \right\}\\
& \leqslant C_4 (\vl , n , \sigma , K) \varepsilon \rightarrow 0 \quad \text { as }  \varepsilon \rightarrow 0^+.
\end{align*}
So \eqref{59} is proved. \\

3. Additionally we assume that $0\leqslant \varphi \leqslant 1$. We show
\begin{equation*}
\int_{\Omega}\left\langle \ao \left(\cdot, \nabla u\right), \nabla\varphi \right\rangle \dx\leqslant 0.
\end{equation*}
We next choose non-negative test functions $\varphi_{i}$, $\varphi_{j,k}$, $\widetilde{\varphi} _h$, $i=1, \ldots N$, $j=1, \ldots N_1$, $k=1, \ldots, m_j$,  $h=1, \ldots, N_{2}$, given by the  Lemma \ref{26}, with the following properties:
\begin{gather*}
\spt \varphi_{i} \subset \frac{3}{2} Q_{i}, \quad \left|\nabla \varphi _{i}\right| \leqslant C_{5} (n) \ell _{i} ^{-1},  \\
\spt \varphi_{j,k} \subset \frac{3}{2} Q_{j,k}, \quad \left|\nabla \varphi _{j,k}\right| \leqslant C_{5} (n) \ell _{j,k} ^{-1},  \\
  \spt \widetilde{\varphi} _h \subset \frac{3}{2} \widetilde{Q}_h, \quad \left|\nabla \widetilde{\varphi}_{h}\right| \leqslant C_{5} (n) \ell _0 ^{-1},\\
\sum_{i}\varphi_{i} + \sum_{j,k}\varphi_{j,k} + \sum _h \widetilde{\varphi}_h =1 \quad  \text {  on }  K.
\end{gather*}
Write
$$
\xi= \sum_{i}\varphi_{i} + \sum_{j,k}\varphi_{j,k} + \sum _h \widetilde{\varphi}_h.
$$
Since $ \frac{3}{2} \widetilde{Q}_h \cap E = \emptyset$ and  $\spt \widetilde{\varphi} _h \subset \frac{3}{2} \widetilde{Q}_h$, substituting $\varphi \widetilde{\varphi}_{h}$ into \eqref{61}, we have 
$$
\int_{\frac{3}{2} \widetilde{Q}_{h}}\left\langle \ao \left(\cdot,  \nabla u\right), \nabla\left(\varphi \widetilde{\varphi}_{h}\right)\right\rangle  \dx\leqslant 0,
$$
for $h=1, \ldots, N_2$. From this, 
\begin{equation} \label{65}
\begin{aligned}
&\int_{\Omega}\left\langle \ao \left(\cdot,  \nabla u\right), \nabla\varphi \right\rangle \dx
= \int_{\Omega}\left\langle \ao \left(\cdot ,  \nabla u\right), \nabla\left(\varphi \xi\right)\right\rangle \dx \\
&\leqslant  \sum _{i}\int_{\frac{3}{2}Q_{i}} \left\langle \ao \left(\cdot, \nabla u\right), \nabla\left( \varphi \varphi _{i} \right) \right\rangle    \dx 
+ \sum _{j,k}\int_{\frac{3}{2}Q_{j,k}} \left\langle \ao \left(\cdot,  \nabla u\right), \nabla\left( \varphi \varphi _{j,k} \right) \right\rangle  \dx .
\end{aligned}
\end{equation}
We can assume that $\int _{E_{r_0}} |\nabla u|^p \dvt <1$, since $|E| =0$. By \eqref{65}, \eqref{46}, \eqref{49}, \eqref{23} and \eqref{37},
\begin{align}
&\int_{\Omega}\left\langle \ao \left(\cdot,  \nabla u\right), \nabla\varphi \right\rangle  \dx\nonumber 
\leqslant C_{6} \sum _i \int_{\frac{3}{2}Q_{i}} \ell _i^{-1} \left(|\nabla u|^p +1\right)\dvt 
+ C_{6} \sum _{j,k} \int_{\frac{3}{2}Q_{j,k}} \ell _{j,k}^{-1} \left(|\nabla u|^p +1\right)\dvt \nonumber\\
&\leqslant  C_{7} \varepsilon + C_{6} \sum _i \int_{B\left(y_i , 3\sqrt{n} \ell _i / 4 \right)} \ell _i^{-1} |\nabla u|^p \dvt + C_{6} \sum _{j,k}  \int_{B\left(y_{j,k }, 3\sqrt{n} \ell _{j,k}/ 4 \right)}  \ell _{j,k}^{-1} |\nabla u|^p \dvt \nonumber\\
&\leqslant  C_{7} \varepsilon + C_8 \sum _i \ell_{i} ^{-1} \max \left\{  \left[\left(\sqrt{n} \ell _{i} - \frac{3\sqrt{n} \ell _{i}}{4} \right)^{-p^+}\int_{B\left(y_i ,  \ell _i  \right)} \left(\sqrt{n} \ell _{i} \right)^{-\tau}\dvt\right]  \right. \nonumber \\
&\left. , \left[\left(\sqrt{n} \ell _{i} - \frac{3\sqrt{n} \ell _{i}}{4} \right)^{-p^+}\int_{B\left(y_i ,  \ell _i  \right)} \left(\sqrt{n} \ell _{i} \right)^{-\tau}\dvt\right]^{\frac{p^-}{p^+}}  \right\} \nonumber\\
&+  C_8 \sum _{j,k} \ell_{j,k} ^{-1} \max \left\{  \left[\left(\sqrt{n} \ell _{j,k} - \frac{3\sqrt{n} \ell _{j,k}}{4} \right)^{-p^+}\int_{B\left(y_{j,k} ,  \ell _{j,k}  \right)} \left(\sqrt{n} \ell _{j,k} \right)^{-\tau}\dvt\right]  \right. \nonumber \\
&\left. , \left[\left(\sqrt{n} \ell _{j,k} - \frac{3\sqrt{n} \ell _{j,k}}{4} \right)^{-p^+}\int_{B\left(y_{j,k} ,  \ell _{j,k}  \right)} \left(\sqrt{n} \ell _{j,k} \right)^{-\tau}\dvt\right]^{\frac{p^-}{p^+}} \right\}. \label{34}
\end{align}
Combining \eqref{34}, \eqref{54}, \eqref{30} and \eqref{37}, we deduce that
\begin{equation*}
\int_{\Omega}\left\langle \ao \left(\cdot, \nabla u\right), \nabla\varphi \right\rangle \dx\leqslant C_{9} \varepsilon,
\end{equation*}
here the $C_i$, $i=6, \ldots, 9$ are positive constants independents of $\varepsilon$. Since $\varepsilon \in (0,r_0)$ was arbitrary, the proof is complete. 

\end{proof}

\bibliographystyle{plain}
\bibliography{ref}	


\end{document}